\def\F{\mathcal{F}}
\newtheorem{theorem}{Theorem} 
\newtheorem{lemma}{Lemma}
\newtheorem{conjecture}{Conjecture}
\newtheorem{problem}{Problem}
\newtheorem{counterex}{Counterexample}
\theoremstyle{definition}
\newtheorem{remark}{Remark}
\newlength{\leftsize}
\newlength{\rightsize}
\newcommand{\pa}[2][9]{%
	\ifthenelse{#1 = 0}{#2}{}%
	\ifthenelse{#1 = 1}{(#2)}{}%
	\ifthenelse{#1 = 2}{\big(#2\big)}{}%
	\ifthenelse{#1 = 3}{\Big(#2\Big)}{}%
	\ifthenelse{#1 = 4}{\bigg(#2\bigg)}{}%
	\ifthenelse{#1 = 5}{\Bigg(#2\Bigg)}{}%
	\ifthenelse{#1 = 9}{\left(#2\right)}{}%
}
\newcommand{\paf}[2]{\left(\f{#1}{#2}\right)}
\newcommand{\crochets}[2][9]{%
	\ifthenelse{#1 = 0}{#2}{}%
	\ifthenelse{#1 = 1}{[#2]}{}%
	\ifthenelse{#1 = 2}{\big[#2\big]}{}%
	\ifthenelse{#1 = 3}{\Big[#2\Big]}{}%
	\ifthenelse{#1 = 4}{\bigg[#2\bigg]}{}%
	\ifthenelse{#1 = 5}{\Bigg[#2\Bigg]}{}%
	\ifthenelse{#1 = 9}{\left[#2\right]}{}%
}
\newcommand{\accolades}[2][9]{%
	\ifthenelse{#1 = 0}{#2}{}%
	\ifthenelse{#1 = 1}{\{#2\}}{}%
	\ifthenelse{#1 = 2}{\big\{#2\big\}}{}%
	\ifthenelse{#1 = 3}{\Big\{#2\Big\}}{}%
	\ifthenelse{#1 = 4}{\bigg\{#2\bigg\}}{}%
	\ifthenelse{#1 = 5}{\Bigg\{#2\Bigg\}}{}%
	\ifthenelse{#1 = 9}{\left\{#2\right\}}{}%
}
\newcommand{\abs}[2][9]{%
	\ifthenelse{#1 = 0}{#2}{}%
	\ifthenelse{#1 = 1}{|#2|}{}%
	\ifthenelse{#1 = 2}{\big|#2\big|}{}%
	\ifthenelse{#1 = 3}{\Big|#2\Big|}{}%
	\ifthenelse{#1 = 4}{\bigg|#2\bigg|}{}%
	\ifthenelse{#1 = 5}{\Bigg|#2\Bigg|}{}%
	\ifthenelse{#1 = 9}{\left|#2\right|}{}%
}
\newcommand{\partieentiere}[1]{\left\lfloor#1\right\rfloor}
\newcommand{\partieentieresup}[1]{\left\lceil#1\right\rceil}
\newcommand{\f}[2]{	
	\mathchoice%
		{\dfrac{#1}{#2}}
    	{\dfrac{#1}{#2}}
		{\frac{#1}{#2}}
		{\frac{#1}{#2}}
}
\newcommand{\tf}[2]{{#1}/{#2}}
\newcommand{\grando}[2]{\underset{#1}{\text{O}}\left({#2}\right)} 
\newcommand{\petito}[2]{\underset{#1}{\text{o}}({#2})} 
\newcommand{\ceil}[1]{\left\lceil #1 \right\rceil}
\newcommand{\Sum}[2]{\underset{#1}{\overset{#2}{\sum}}} 
\newcommand{\Prod}[2]{\underset{#1}{\overset{#2}{\prod}}} 
\newcommand{\plus}{\mbox{\protect\raisebox{.2mm}{\tiny{\ensuremath{+}}}}}
\newcommand{\moins}{\mbox{\protect\raisebox{.2mm}{\tiny{\ensuremath{-}}}}}
\newcommand{\pinf}{{\plus\ensuremath{\infty}}}
\newcommand{\minf}{{\moins\ensuremath{\infty}}}
\newcommand{\TestGauche}[1]{\ifthenelse{\equal{#1}{}}{\minf}{#1}}
\newcommand{\TestDroite}[1]{\ifthenelse{\equal{#1}{}}{\pinf}{#1}}
\newsavebox{\boite}
\def\debutcom{\begin{lrbox}{\boite}}
\def\fincomg{\end{lrbox}\makebox[0cm][l]{\usebox{\boite}}%
             \hskip\linewidth\hskip-\textwidth}
\def\fincomd{\end{lrbox}\makebox[0cm][r]{\usebox{\boite}}}
\newenvironment{calculs:rcl}[4][2]{%
  	\par\vskip0.1cm\noindent
  	\begin{tabular*}{\linewidth}[t]%
  		{@{}>{\debutcom}l<{\fincomg}@{\extracolsep{\fill}}
		>{$}#2<{$}@{\extracolsep{0pt}}%
		>{$\ }#3<{\ $}%
      		@{\extracolsep{0pt}}>{$}#4<{$}%
      		@{\extracolsep{\fill}}>{\debutcom}r<{\fincomd}@{}}%
	}{%
  	\end{tabular*}\par%
  	}
\newtheorem*{Thm*}{Theorem}
\newtheorem*{Lem*}{Lemma}
\theoremstyle{definition}
\newtheorem*{Def*}{Definition}
\newtheorem*{Ex*}{Example}
\newtheorem*{CEx*}{Counterexample}
\theoremstyle{remark}
\newtheorem*{Rk*}{Remark}
\newtheorem*{Att*}{Warning}
\renewcommand{\emptyset}{\varnothing}
\title{Solution to a problem of Katona on counting cliques of weighted graphs}
\author{Peter Borg\thanks{Department of Mathematics, Faculty of Science, University of Malta, Malta, email: \texttt{peter.borg@um.edu.mt}} \quad Carl Feghali\thanks{Univ Lyon, EnsL, CNRS, LIP, F-69342, Lyon Cedex 07, France, email: \texttt{carl.feghali@ens-lyon.fr} } \quad R\'emi Pellerin\thanks{Univ Lyon, EnsL, CNRS, LIP, F-69342, Lyon Cedex 07, France, email: \texttt{remi.pellerin@ens-lyon.fr} }}
\date{}
\begin{document}
\maketitle

\begin{abstract}
A subset $I$ of the vertex set $V(G)$ of a graph $G$ is called a \emph{$k$-clique independent set of $G$} if no $k$ vertices in $I$ form a $k$-clique of $G$. An independent set is a $2$-clique independent set. Let $\pi_k(G)$ denote the number of $k$-cliques of $G$. For a function $w: V(G) \rightarrow \{0, 1, 2, \dots\}$, let $G(w)$ be the graph obtained from $G$ by replacing each vertex $v$ by a $w(v)$-clique $K^v$ and making each vertex of $K^u$ adjacent to each vertex of $K^v$ for each edge $\{u,v\}$ of $G$. For an integer $m \geq 1$, consider any $w$ with $\sum_{v \in V(G)} w(v) = m$. For $U \subseteq V(G)$, we say that $w$ is \emph{uniform on $U$} if $w(v) = 0$ for each $v \in V(G) \setminus U$ and, for each $u \in U$, $w(u) = \left\lfloor m/|U| \right\rfloor$ or $w(u) = \left\lceil m/|U| \right\rceil$. Katona asked if $\pi_k(G(w))$ is smallest when $w$ is uniform on a largest $k$-clique independent set of $G$. He placed particular emphasis on the Sperner graph $B_n$, given by $V(B_n) = \{X \colon X \subseteq \{1, \dots, n\}\}$ and $E(B_n) = \{\{X,Y\} \colon X \subsetneq Y \in V(B_n)\}$. He provided an affirmative answer for $k = 2$ (and any $G$). We determine graphs for which the answer is negative for every $k \geq 3$. These include $B_n$ for $n \geq 2$. Generalizing Sperner's Theorem and a recent result of Qian, Engel and Xu, we show that $\pi_k(B_n(w))$ is smallest when $w$ is uniform on a largest independent set of $B_n$. We also show that the same holds for complete multipartite graphs and chordal graphs. We show that this is not true of every graph, using a deep result of Bohman on triangle-free graphs.
\end{abstract}

\section{Introduction}

Let $\mathbb{N}$ denote set of positive integers, and let $\mathbb{N}_0$ denote $\{0\} \cup \mathbb{N}$. For $n \in \mathbb{N}_0$, let $[n]$ denote the $n$-set $\{i \in \mathbb{N} \colon i \le n\}$ (note that $[0] = \emptyset$). For a set $X$, let $2^X$ denote the \emph{power set of $X$} ($\{A \colon A \subseteq X\}$) and, for $k \in \mathbb{N}_0$, let $\binom{X}{k}$ denote $\{A \in 2^X \colon |A| = k\}$. 

A family $\mathcal{A}$ of sets is called an \emph{antichain} or a \emph{Sperner family} if $A \nsubseteq B$ for every $A, B \in \mathcal{A}$ with $A \neq B$. A cornerstone in extremal
set theory is Sperner's Theorem \cite{Sperner}, which bounds the size of an antichain. 

\begin{theorem}[Sperner's Theorem \cite{Sperner}]\label{thm:sperner}
	If $\mathcal{A} \subseteq 2^{[n]}$ and $\mathcal{A}$ is an antichain, then
	$$
	|\mathcal{A}| \leq \binom{n}{\lceil \frac{n}{2} \rceil}.
	$$

	\noindent Moreover, equality holds if and only if $\mathcal{A} =
	\binom{[n]}{\partieentiere{\f{n}{2}}}$ or  $\mathcal{A} =
	\binom{[n]}{\partieentieresup{\f{n}{2}}}$.
\end{theorem}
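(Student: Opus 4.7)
My plan is to prove the theorem via the \emph{LYM inequality}, derived by a double-counting argument on maximal chains in the Boolean lattice $2^{[n]}$. Call a sequence $\emptyset = C_0 \subsetneq C_1 \subsetneq \cdots \subsetneq C_n = [n]$ with $|C_i| = i$ a \emph{maximal chain}; such chains are in bijection with permutations of $[n]$ (record the order in which elements are inserted), so there are exactly $n!$ of them.

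The key estimate is as follows. For each $A \in \mathcal{A}$ with $|A| = k$, the number of maximal chains passing through $A$ is $k!\,(n-k)!$: pick a linear order on the elements of $A$ followed by a linear order on $[n] \setminus A$. Since $\mathcal{A}$ is an antichain, no single maximal chain contains two distinct members of $\mathcal{A}$, so summing over $\mathcal{A}$ yields
\[
\sum_{A \in \mathcal{A}} |A|!\, (n - |A|)! \;\leq\; n!,
\]
which rearranges to the LYM inequality $\sum_{A \in \mathcal{A}} \binom{n}{|A|}^{-1} \leq 1$. Because $\binom{n}{k}$ is maximized at $k = \lfloor n/2 \rfloor$ (equivalently $\lceil n/2 \rceil$), replacing every denominator by $\binom{n}{\lceil n/2 \rceil}$ gives $|\mathcal{A}| \leq \binom{n}{\lceil n/2 \rceil}$.

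For the equality characterization, suppose $|\mathcal{A}| = \binom{n}{\lceil n/2 \rceil}$. Equality in the LYM step forces $\binom{n}{|A|} = \binom{n}{\lceil n/2 \rceil}$ for every $A \in \mathcal{A}$, hence $|A| \in \{\lfloor n/2 \rfloor, \lceil n/2 \rceil\}$. When $n$ is even these two values coincide and $\mathcal{A} = \binom{[n]}{n/2}$ directly. The main obstacle is the odd case, in which I must rule out that $\mathcal{A}$ splits nontrivially between the two middle layers: writing $k = \lfloor n/2 \rfloor$, $\mathcal{A}_k = \mathcal{A} \cap \binom{[n]}{k}$ and $\mathcal{A}_{k+1} = \mathcal{A} \cap \binom{[n]}{k+1}$, the antichain condition implies that the upper shadow $\partial^+ \mathcal{A}_k := \{B \in \binom{[n]}{k+1} : A \subset B \text{ for some } A \in \mathcal{A}_k\}$ is disjoint from $\mathcal{A}_{k+1}$. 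I would invoke the local LYM (shadow) inequality for a nonempty proper subfamily of a middle layer, which gives $|\partial^+ \mathcal{A}_k| / \binom{n}{k+1} > |\mathcal{A}_k| / \binom{n}{k}$; combined with $|\mathcal{A}_k| + |\mathcal{A}_{k+1}| = \binom{n}{k+1}$ this produces a strict contradiction unless one of $\mathcal{A}_k, \mathcal{A}_{k+1}$ is empty, which forces $\mathcal{A}$ to lie in a single layer. Establishing the \emph{strict} shadow bound for a proper nonempty subfamily is the technical crux and follows by a standard averaging argument over edges of the bipartite inclusion graph between consecutive levels of the lattice.
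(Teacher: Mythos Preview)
The paper does not give its own proof of Sperner's Theorem; it is quoted as a classical result with a citation to \cite{Sperner} and then used as a tool (for instance in deducing Theorem~\ref{thm:qian} and in the proof of Theorem~\ref{rem1result}). So there is no in-paper argument to compare against.

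That said, your proposal is a correct and standard proof. The double count of maximal chains gives the LYM inequality $\sum_{A\in\mathcal{A}}\binom{n}{|A|}^{-1}\le 1$, and bounding each term by $\binom{n}{\lceil n/2\rceil}^{-1}$ yields the size bound. For the characterization of equality your reasoning is also sound: equality in the replacement step forces $|A|\in\{\lfloor n/2\rfloor,\lceil n/2\rceil\}$ for all $A$, which settles the even case; in the odd case, the strict local LYM inequality $|\partial^+\mathcal{A}_k|/\binom{n}{k+1}>|\mathcal{A}_k|/\binom{n}{k}$ for a nonempty proper $\mathcal{A}_k\subsetneq\binom{[n]}{k}$, combined with $\binom{n}{k}=\binom{n}{k+1}$ and the disjointness of $\partial^+\mathcal{A}_k$ and $\mathcal{A}_{k+1}$, does force one of $\mathcal{A}_k,\mathcal{A}_{k+1}$ to be empty. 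The strictness you flag as the crux is indeed obtained by the averaging (double-counting) argument on the bipartite inclusion graph between levels $k$ and $k+1$: equality would require every $B\in\partial^+\mathcal{A}_k$ to have all $k+1$ of its $k$-subsets in $\mathcal{A}_k$, which a connectivity/path argument rules out for proper nonempty $\mathcal{A}_k$. Nothing is missing.
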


There have been many generalizations, extensions and variants of
Theorem~\ref{thm:sperner}; see, for example,~\cite{DGS, DGKS, Erdos, Kleitman}. Of particular relevance to this paper is a generalization due to Qian, Engel and Xu~\cite{QEX} in which repetition of sets is allowed. A multifamily is a pair $(\mathcal{F},q)$ such that $\mathcal{F}$ is a family and $q$ is a function with domain $\F$ and codomain $\mathbb{N}_0$. A multifamily can be viewed as a family $\F$ such that, for each $F \in \mathcal{F}$, $F$ appears $q(F)$ times.  
Let

$$
\theta(\F,q) = \Sum{F \in \mathcal{F}}{} \binom{q(F)}{2} + \Sum{F, F' \in \mathcal{F} \colon F
\subsetneq F'}{} q(F)q(F').
$$

\begin{theorem}[Qian, Engel and Xu~\cite{QEX}]\label{thm:qian}
	For any $n \geq 1$ and $m \geq 1$, the minimum of $\theta(\F,q)$ over all multifamilies $(\F,q)$ with $\F \subseteq 2^{[n]}$ and $\sum_{F \in \mathcal{F}} q(F) = m$ is attained if
	$\F \in \left\{\binom{[n]}{\lfloor n/2 \rfloor}, \binom{[n]}{\lceil n/2 \rceil} \right\}$ and $q(F) \in \left\{ \left\lfloor m/|\F| \right\rfloor, \left\lceil m/|\F| \right\rceil \right\}$ for each $F \in \F$.
\end{theorem}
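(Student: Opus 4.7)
The plan is to compare the multifamily with a symmetric chain decomposition (SCD) of the Boolean lattice. By the classical theorem of de Bruijn--Tengbergen--Kruyswijk, $2^{[n]}$ can be partitioned into $N := \binom{n}{\lfloor n/2 \rfloor}$ chains $\mathcal{C}_1, \ldots, \mathcal{C}_N$, each \emph{symmetric} in the sense that the sizes of its elements form an interval $\{a, a+1, \ldots, n-a\}$ for some $a$. In particular, every such chain meets each of the middle layers $\binom{[n]}{\lfloor n/2 \rfloor}$ and $\binom{[n]}{\lceil n/2 \rceil}$ in exactly one element.

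Given a multifamily $(\F,q)$ with $\sum_{F \in \F} q(F) = m$, I set $m_i := \sum_{F \in \mathcal{C}_i \cap \F} q(F)$, so that $\sum_i m_i = m$. Since any two sets in a chain are comparable, a direct count within a single chain yields
\[
\sum_{F \in \mathcal{C}_i \cap \F} \binom{q(F)}{2} + \sum_{\substack{F, F' \in \mathcal{C}_i \cap \F \\ F \subsetneq F'}} q(F)q(F') = \binom{m_i}{2}.
\]
Summing over $i$, the first term becomes exactly $\sum_{F \in \F} \binom{q(F)}{2}$ (each $F \in \F$ lies in a unique chain), while the second term equals the restriction of $\sum_{F \subsetneq F'} q(F)q(F')$ to same-chain pairs and is therefore at most the full sum. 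This gives $\theta(\F,q) \geq \sum_{i=1}^N \binom{m_i}{2}$. A standard smoothing argument---if $m_i \geq m_j + 2$ then the move $(m_i, m_j) \mapsto (m_i - 1, m_j + 1)$ strictly decreases $\sum_k \binom{m_k}{2}$---shows that the right-hand side is minimised precisely when $m_i \in \{\lfloor m/N \rfloor, \lceil m/N \rceil\}$ for every $i$.

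It remains to check that the stated extremal configurations realise both inequalities with equality. Taking $\F = \binom{[n]}{\lfloor n/2 \rfloor}$ (the case $\binom{[n]}{\lceil n/2 \rceil}$ is symmetric) with $q$ uniform, the family $\F$ is an antichain, so the second sum in the definition of $\theta(\F,q)$ vanishes; by the symmetric-chain property each $\mathcal{C}_i$ meets $\F$ in a unique element $F_i$, so $\theta(\F,q) = \sum_{F \in \F} \binom{q(F)}{2} = \sum_i \binom{q(F_i)}{2} = \sum_i \binom{m_i}{2}$, and the uniformity of $q$ makes the $m_i$ differ by at most one. The only non-elementary ingredient in the plan is the existence of an SCD with one element per middle layer in each chain; after invoking it, the argument reduces to the convexity/smoothing step above, which I expect to be routine.
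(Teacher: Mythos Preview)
Your argument is correct. The symmetric chain decomposition reduces $\theta(\F,q)$ to a lower bound $\sum_i \binom{m_i}{2}$ that is sharp at the uniform middle-layer weighting, and the smoothing step is indeed routine.

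The paper does not prove this statement directly: it records that Qian, Engel and Xu established the analogous bound for $\theta(\F,q)+\sum_F\binom{q(F)}{2}$ and asserts that the present formulation follows from that together with Sperner's theorem. More substantively, the paper later recovers the result as the $k=2$ case of Theorem~\ref{thm:sperner_min}, whose proof rests on the weight-shifting Lemma~\ref{lem:main} combined with the K\H{o}nig--Hall matching of Lemma~\ref{lemma:Hall}: one repeatedly pushes weight from the outer levels toward $\binom{[n]}{\lceil n/2\rceil}$ along an injective matching, never increasing the clique count, and then applies Lemma~\ref{cor:cauchy} to equalise. Your SCD route is more elementary and gives an explicit numerical lower bound in one stroke, but it is tailored to $k=2$ (the identity $\sum_F\binom{q(F)}{2}+\sum_{F\subsetneq F'}q(F)q(F')=\binom{m_i}{2}$ on a chain has no clean analogue for higher~$k$); the paper's shifting machinery is what allows the extension to arbitrary~$k$.
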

\noindent Qian, Engel and Xu actually proved that the result holds for the quantity $\theta(\F,q) + \Sum{F \in \mathcal{F}}{} \binom{q(F)}{2}$ rather than $\theta(\F,q)$ \cite[Theorem~1.1]{QEX}, but Theorem~\ref{thm:qian} follows from this and Theorem~\ref{thm:sperner}.

Recently, Katona~\cite{Katona} obtained a far-reaching generalization of Theorem~\ref{thm:qian}. To be able to state his result, we require a number of definitions.

As usual, we denote the vertex set and the edge set of a graph $G$ by $V(G)$ and $E(G)$, respectively. We take any graph $G$ to be simple, that is, $G = (V(G), E(G))$ with $E(G) \subseteq \binom{V(G)}{2}$. We may represent any edge $\{u,v\}$ by $uv$. The \emph{open neighbourhood} $N_G(v)$ of a vertex $v$ of $G$ is the set of neighbours of $v$, that is, $N_G(v) = \{u \in V(G) \colon uv \in E(G)\}$. 
The \emph{closed neighbourhood} $N_G[v]$ of $v$ is the set $N_G(v) \cup \{v\}$. For $X \subseteq V(G)$, $G[X]$ denotes the subgraph of $G$ \emph{induced by $X$}, that is, $G[X] = (X, E(G) \cap \binom{X}{2})$. 

For a graph $G$ and a (weight) function $w: V(G) \rightarrow \mathbb{N}_0$, let $G(w)$ be the graph obtained from $G$ by replacing each vertex $v$ by a $w(v)$-clique $K^v$ and making each vertex of $K^u$ adjacent to each vertex of $K^v$ for each edge $uv$ of $G$. 
More formally, $G(w)$ is given by
$$V(G(w)) = \{(v,i) \colon v \in V(G), \, i \in [w(v)]\}$$ 
and 
$$\quad E(G(w)) = \bigcup_{v \in V(G)} {\{(v,i) \colon i \in [w(v)]\} \choose 2} \cup \bigcup_{uv \in E(G)} \{(u,i)(v,j) \colon i \in [w(u)], \, j \in [w(v)]\}.$$
%
We call $w$ an \emph{$m$-weighting of $G$}, where $m = \sum_{v \in V(G)} w(v)$. For $k \geq 1$, let $\mathcal{K}_k(G)$ denote the set of vertex sets of $k$-cliques ($k$-vertex complete subgraphs) of $G$, and let $\pi_k(G)$ denote the number of $k$-cliques of $G$. Thus, $\pi_k(G) = |\mathcal{K}_k(G)|$. Observe that the number of edges of $G(w)$ is

$$
\pi_2(G(w)) = \sum_{v \in V(G)} \binom{w(v)}{2} + \sum_{uv \in E(G)} w(u)w(v), 
$$

\noindent and for any $k \geq 1$,

	$$\pi_k(G(w)) = \Sum{i = 1}{k} \quad \Sum{\{v_1, \dots, v_i\} \in \mathcal{K}_i(G) }{}
	\quad
	\Sum{\substack{k_1 + \cdots + k_i = k \\ k_1,\ldots,k_i \geq 1}}{} \quad \Prod{j = 1}{i} \binom{w(v_j)}{k_j}.$$


Given an $m$-weighting $w$ of $G$ and a subset $U$ of $V(G)$, we say that $w$ is \emph{uniform on $U$} if $w(v) = 0$ for each $v \in V(G) \setminus U$ and, for each $u \in U$, $w(u) = \left\lfloor m/|U| \right\rfloor$ or $w(u) = \left\lceil m/|U| \right\rceil$. If $U$ is an independent set of $G$ (that is, $uv \notin E(G)$ for every $u, v \in U$) of maximum size and $w$ is uniform on $U$, then, as in~\cite{Katona}, $w$ is said to be \emph{uniform-$\alpha$}.

For $n \geq 1$, let $B_n$ be the \emph{Sperner graph} given by $V(B_n) = 2^{[n]}$ and $E(G) = \{XY \colon X \subsetneq Y \in 2^{[n]}\}$. Note that Sperner's Theorem gives us the size of a largest independent set of $B_n$. Theorem~\ref{thm:qian} may thus be restated as follows.

\begin{theorem}\label{thm:engel} If $n \geq 1$, $m \geq 1$, $w$ and $w'$ are $m$-weightings of $B_n$, and $w'$ is uniform-$\alpha$, then $\pi_2(B_n(w')) \leq \pi_2(B_n(w))$.
\end{theorem}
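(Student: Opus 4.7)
The plan is to show that Theorem~\ref{thm:engel} is a direct translation of Theorem~\ref{thm:qian} under a natural bijection between $m$-weightings of $B_n$ and multifamilies in $2^{[n]}$ of total weight $m$. First I would associate to an $m$-weighting $w$ of $B_n$ the multifamily $(\mathcal{F}_w, q_w)$, where $\mathcal{F}_w = \{X \in 2^{[n]} : w(X) > 0\}$ and $q_w$ is the restriction of $w$ to $\mathcal{F}_w$; conversely, any multifamily $(\mathcal{F},q)$ with $\mathcal{F} \subseteq 2^{[n]}$ and $\sum_{F \in \mathcal{F}} q(F) = m$ yields an $m$-weighting of $B_n$ by setting $w(X) = q(X)$ for $X \in \mathcal{F}$ and $w(X) = 0$ otherwise.

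The second step is to verify that $\pi_2(B_n(w)) = \theta(\mathcal{F}_w, q_w)$. This follows directly from the edge-counting formula displayed in the excerpt together with the fact that $E(B_n) = \{XY : X \subsetneq Y\}$: we obtain
$$\pi_2(B_n(w)) = \sum_{X \in 2^{[n]}} \binom{w(X)}{2} + \sum_{\substack{X, Y \in 2^{[n]} \\ X \subsetneq Y}} w(X)w(Y),$$
and dropping the vanishing terms (those with $w(X)=0$) leaves exactly $\theta(\mathcal{F}_w, q_w)$ by definition.

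Next I would identify uniform-$\alpha$ weightings with the configurations that attain the minimum in Theorem~\ref{thm:qian}. Two sets $X, Y \in V(B_n)$ are non-adjacent precisely when neither properly contains the other, so an independent set of $B_n$ is exactly an antichain in $2^{[n]}$. Sperner's Theorem (Theorem~\ref{thm:sperner}) then implies that the largest independent sets of $B_n$ are $\binom{[n]}{\lfloor n/2 \rfloor}$ and $\binom{[n]}{\lceil n/2 \rceil}$. Consequently, an $m$-weighting $w'$ is uniform-$\alpha$ if and only if $\mathcal{F}_{w'}$ is one of these two antichains and $q_{w'}(F) \in \{\lfloor m/|\mathcal{F}_{w'}| \rfloor, \lceil m/|\mathcal{F}_{w'}| \rceil\}$ for every $F \in \mathcal{F}_{w'}$, which is exactly the hypothesis under which Theorem~\ref{thm:qian} attains the minimum of $\theta$.

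Combining these observations gives, for any $m$-weighting $w$ and any uniform-$\alpha$ $m$-weighting $w'$,
$$\pi_2(B_n(w')) = \theta(\mathcal{F}_{w'}, q_{w'}) \leq \theta(\mathcal{F}_w, q_w) = \pi_2(B_n(w)),$$
which is the desired inequality. I do not expect a real obstacle: all the content lies in Theorems~\ref{thm:sperner} and~\ref{thm:qian}, and the proof reduces to a bookkeeping argument passing between the multifamily formulation and the weighted-graph formulation.
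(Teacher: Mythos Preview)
Your proposal is correct and matches the paper's treatment: the paper introduces Theorem~\ref{thm:engel} precisely as a restatement of Theorem~\ref{thm:qian} (after noting, via Sperner's Theorem, that the largest independent sets of $B_n$ are the middle levels), and your argument simply makes that dictionary between multifamilies and $m$-weightings explicit. The only cosmetic wrinkle is that when $m < \binom{n}{\lceil n/2 \rceil}$ your $\mathcal{F}_{w'}$ is a proper subset of the middle level rather than the full level as in Theorem~\ref{thm:qian}, but since zero-weight members contribute nothing to $\theta$ this does not affect the value, and hence not the conclusion.
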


\noindent Rather surprisingly, Katona showed that Theorem~\ref{thm:engel} can be generalized to arbitrary graphs. 

\begin{theorem}[Katona \cite{Katona}]\label{thm:katona}
If $m \geq 1$, $w$ and $w'$ are $m$-weightings of a graph $G$, and $w'$ is uniform-$\alpha$, then $\pi_2(G(w')) \leq \pi_2(G(w))$.
\end{theorem}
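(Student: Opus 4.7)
The plan is a two-step reduction from arbitrary $m$-weightings to uniform-$\alpha$ ones, based on the expansion
\[
\pi_2(G(w)) = \sum_{v \in V(G)} \binom{w(v)}{2} + \sum_{uv \in E(G)} w(u)w(v)
\]
already given in the excerpt. Step one reduces to weightings whose support is an independent set of $G$; step two is a convexity argument for such weightings.

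For step one, suppose $w(u),w(v)>0$ for some edge $uv\in E(G)$, and let $w_{1}$ be obtained from $w$ by moving all of $v$'s weight onto $u$ (and $w_{2}$ the reverse). A careful expansion yields
\[
\pi_2(G(w_{1}))-\pi_2(G(w))=w(v)(A-B),\qquad \pi_2(G(w_{2}))-\pi_2(G(w))=w(u)(B-A),
\]
where $A=\sum_{y\in N_G(u)\setminus\{v\}}w(y)$ and $B=\sum_{y\in N_G(v)\setminus\{u\}}w(y)$. The essential cancellation is that the extra clique contribution $\binom{w(u)+w(v)}{2}-\binom{w(u)}{2}-\binom{w(v)}{2}=w(u)w(v)$ is exactly balanced by the loss of the edge $uv$ from the edge-sum, so only the neighbourhood discrepancy survives. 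Since $w(v)(A-B)$ and $w(u)(B-A)$ have opposite signs, at least one of $w_{1},w_{2}$ satisfies $\pi_2\leq \pi_2(G(w))$ while having strictly smaller support. Iterating finitely many times produces a weighting $\tilde w$ supported on an independent set $I$ of $G$ with $\pi_2(G(\tilde w))\leq \pi_2(G(w))$.

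For step two, since $I$ is independent, $\pi_2(G(\tilde w))=\sum_{v\in I}\binom{\tilde w(v)}{2}$. A standard swap argument (decreasing $\tilde w(u)$ by $1$ and increasing $\tilde w(v)$ by $1$ whenever $\tilde w(u)\geq \tilde w(v)+2$, which strictly decreases the sum) shows that the function
\[
f(k):=\min\!\left\{\sum_{i=1}^{k}\binom{x_{i}}{2}:\; x_{i}\in\mathbb{N}_{0},\; \sum_{i=1}^{k}x_{i}=m\right\}
\]
is attained precisely when the $x_{i}$ differ pairwise by at most $1$, that is, $x_{i}\in\{\lfloor m/k\rfloor,\lceil m/k\rceil\}$. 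Moreover, $f$ is non-increasing in $k$ (pad any optimum with zeros), so for a maximum independent set $U$ of $G$ and the uniform-$\alpha$ weighting $w'$ supported on $U$ one obtains $\pi_2(G(\tilde w))\geq f(|I|)\geq f(|U|)=\pi_2(G(w'))$. Combining with step one gives $\pi_2(G(w'))\leq\pi_2(G(w))$.

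The main obstacle is the identity in step one: one must carefully expand $\binom{w(u)+w(v)}{2}$ and classify edges into four types (disjoint from $\{u,v\}$, the edge $uv$ itself, and edges incident to exactly $u$ or exactly $v$) in order to verify that the $w(u)w(v)$ terms cancel and leave behind only the clean neighbourhood comparison $w(v)(A-B)$. Once this is in hand, the iteration to an independent-set support and the concluding convexity/padding step are routine.
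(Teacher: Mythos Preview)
Your proposal is correct and follows essentially the same route that the paper attributes to Katona: the edge-shift operation $w_{ab}$ with the observation that one of $w_{ab},w_{ba}$ does not increase $\pi_2$, iterated until the support is independent, followed by the convexity/balancing argument on an independent set (the paper's Lemma~\ref{lem:cauchy} and Lemma~\ref{cor:cauchy}). Your explicit computation $\pi_2(G(w_1))-\pi_2(G(w))=w(v)(A-B)$ is exactly the content of the claim ``$\pi_2(G(w_{ab}))\le\pi_2(G(w))$ or $\pi_2(G(w_{ba}))\le\pi_2(G(w))$'' that the paper quotes from~\cite{Katona}.
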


\noindent He then asked the general question below. For a graph $G$ and $k \geq 1$, we call a subset $I$ of $V(G)$ a \emph{$k$-clique independent set of $G$} if no $k$-element subset of $I$ is a member of $\mathcal{K}_k(G)$ (that is, $\binom{I}{k} \cap \mathcal{K}_k(G) = \emptyset$). Let the size of a largest $k$-clique independent set of $G$ be denoted by $\alpha_k(G)$ and called the \emph{$k$-clique independence number of~$G$}.

\begin{problem}[{\cite[Problem 3]{Katona}}] \label{problem_naif}
Is it true that if $m \geq 1$, $k \geq 2$, $w$ and $w'$ are $m$-weightings of a graph $G$, and $w'$ is uniform on a largest $k$-clique independent set of $G$, then $\pi_k(G(w')) \leq \pi_k(G(w))$?
\end{problem}

\noindent Theorem~\ref{thm:katona} provides a positive answer to
Problem~\ref{problem_naif} for $k = 2$. Unfortunately, if $k \geq 3$, one cannot hope for a positive answer to Problem~\ref{problem_naif}, as shown in the following counterexample and in Section~\ref{sec:proofs} (see Remarks~\ref{rem:sperner_min} and \ref{rem:cmg}). 

\begin{counterex}\label{c-ex} \emph{
Let $G$ be the $3$-vertex path $(\{a, b, c\}, \{ab, bc\})$, let $k \geq 3$, let $w'$ be the $3k$-weighting of $G$ that is uniform on $V(G)$, and let $w$ be the $3k$-weighting of $G$ defined by $w(a) = w'(a) + w'(b)$, $w(b) = 0$, and $w(c) = w'(c)$. Note that $V(G)$ is the largest $k$-clique independent set of $G$. We have}
$$\pi_k(G(w'))  =  \binom{w'(a) + w'(b)}{k} + \binom{w'(b) + w'(c)}{k} - \binom{w'(b)}{k} = 2\binom{2k}{k} - \binom{k}{k},$$
$$\pi_k(G(w)) = \binom{w'(a) + w'(b)}{k} + \binom{w'(c)}{k} =
\binom{2k}{k} + \binom{k}{k},$$

\noindent \emph{and hence $\pi_k(G(w)) < \pi_k(G(w'))$.}
\end{counterex}

As our primary contribution, we completely address Problem~\ref{problem_naif} for Sperner graphs, thereby answering another question of Katona \cite[Problem 3]{Katona}. 

\begin{theorem}\label{thm:sperner_min}
If $n \geq 1$, $m \geq 1$, $k \geq 2$, $w$ and $w'$ are $m$-weightings of $B_n$, and $w'$ is uniform on $\binom{[n]}{\lceil n/2 \rceil}$, then $\pi_k(B_n(w')) \leq \pi_k(B_n(w))$.
\end{theorem}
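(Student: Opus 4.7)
The plan is to exploit a symmetric chain decomposition (SCD) of the Boolean lattice, due to de Bruijn, Tengbergen and Kruyswijk: there exist pairwise disjoint chains $C_1, \dots, C_N$ of $B_n$, with $N = \binom{n}{\lceil n/2 \rceil}$, partitioning $2^{[n]}$, and each $C_i$ meets the middle level $\binom{[n]}{\lceil n/2 \rceil}$ in exactly one element $F_i^\star$.

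The key step is the lower bound $\pi_k(B_n(w)) \geq \sum_{i=1}^N \binom{s_i}{k}$, where $s_i := \sum_{F \in C_i} w(F)$. Since $C_i$ is a chain in $B_n$, any two distinct vertices of $V_i := \bigcup_{F \in C_i} K^F$ are adjacent in $B_n(w)$ (they lie in a common $K^F$, or in $K^F$ and $K^{F'}$ with $F, F' \in C_i$ comparable), so $V_i$ induces a clique of order $s_i$ in $B_n(w)$ and contributes $\binom{s_i}{k}$ distinct $k$-cliques; the $V_i$ are pairwise vertex-disjoint because the chains $C_i$ are.

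A standard swap argument, based on the discrete convexity of $x \mapsto \binom{x}{k}$ on $\mathbb{N}_0$ (equivalently, $\binom{x+1}{k} - \binom{x}{k} = \binom{x}{k-1}$ is nondecreasing in $x$), then shows that the minimum of $\sum_{i=1}^N \binom{s_i}{k}$ over nonnegative integer tuples $(s_1,\dots,s_N)$ with $\sum_i s_i = m$ is attained precisely when the $s_i$ are as balanced as possible, namely $s_i \in \{\lfloor m/N \rfloor, \lceil m/N \rceil\}$ for every $i$.

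Finally, I verify that $w'$ attains this balanced bound. The support of $w'$ is the antichain $\binom{[n]}{\lceil n/2 \rceil}$, so every clique of $B_n(w')$ lies within a single $K^F$; hence $\pi_k(B_n(w')) = \sum_{F \in \binom{[n]}{\lceil n/2 \rceil}} \binom{w'(F)}{k}$. Via the bijection $i \mapsto F_i^\star$, this equals $\sum_{i=1}^N \binom{s_i(w')}{k}$ with each $s_i(w') \in \{\lfloor m/N \rfloor, \lceil m/N \rceil\}$. Chaining the three bounds yields $\pi_k(B_n(w)) \geq \pi_k(B_n(w'))$. I do not anticipate a substantive obstacle; the proof relies solely on the existence of an SCD of $2^{[n]}$ whose chains each meet a common maximum antichain exactly once, which is the feature of $B_n$ the strategy exploits and which fails for the counterexample graph $P_3$ given earlier in the paper.
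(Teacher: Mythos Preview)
Your proof is correct and takes a genuinely different route from the paper's. The paper proves Theorem~\ref{thm:sperner_min} via its weight-shifting Lemma~\ref{lem:main}: using the K\H{o}nig--Hall matchings of Lemma~\ref{lemma:Hall}, it iteratively pushes all weight off the outermost occupied level of $2^{[n]}$ onto the adjacent level, one level at a time, until the support lies in $\binom{[n]}{\lceil n/2\rceil}$, and then invokes Lemma~\ref{cor:cauchy} to balance. Your argument bypasses the shifting machinery entirely: the symmetric chain decomposition gives, in one stroke, a partition of $V(B_n(w))$ into $N=\binom{n}{\lceil n/2\rceil}$ cliques of sizes $s_1,\dots,s_N$ summing to $m$, whence $\pi_k(B_n(w))\ge\sum_i\binom{s_i}{k}$; discrete convexity of $\binom{\cdot}{k}$ then minimizes this sum at the balanced tuple, which $w'$ realizes exactly because each symmetric chain hits the middle level once. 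Your approach is shorter and more elementary for $B_n$ specifically, and it makes transparent which structural feature is doing the work (a chain cover into $\alpha(G)$ chains, i.e.\ a Dilworth-type decomposition). The paper's approach, by contrast, is built around Lemma~\ref{lem:main}, which is the common engine it reuses for Theorems~\ref{thm:cmg} and~\ref{thm:chordal} and for the negative result in Theorem~\ref{rem1result}; so the shifting framework pays off across the paper even though it is heavier than necessary for Theorem~\ref{thm:sperner_min} alone.
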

\noindent Theorem~\ref{thm:sperner_min} generalizes the inequalities in Theorems~\ref{thm:sperner} and \ref{thm:engel}. Indeed, let $\mathcal{A} \subseteq 2^{[n]}$ such that $\mathcal{A}$ is an antichain. Let $w_1$ and $w_2$ be $\left( \binom{n}{\lceil n/2 \rceil} + 1 \right)$-weightings of $B_n$ such that $w_1$ is uniform on $\mathcal{A}$ and $w_2$ is uniform on $\binom{[n]}{\lceil n/2 \rceil}$. Note that $\pi_2(B_n(w_2)) = 1$. Thus, by Theorem~\ref{thm:sperner_min}, $\pi_2(B_n(w_1)) \geq 1$. The inequality in Theorem~\ref{thm:sperner} follows. Now, by Theorem~\ref{thm:sperner}, $w'$ is uniform-$\alpha$. By Theorem~\ref{thm:sperner_min} for $k = 2$, Theorem~\ref{thm:engel} follows. 

\begin{remark} \label{rem:sperner_min} In \cite[Problem 3]{Katona}, Katona placed particular emphasis on solving Problem~\ref{problem_naif} for $G = B_n$. Theorem~\ref{thm:sperner_min} already tells us how to minimize $\pi_k(B_n(w))$. In Section~\ref{sec:proofs}, we show that, furthermore, the answer to Problem~\ref{problem_naif} for $G = B_n$ is negative if $n \geq 2$, $k \geq 3$, and $m \geq k \alpha_k(B_n)$.
\end{remark}

We also address Problem~\ref{problem_naif} for complete multipartite graphs and chordal graphs.

If $I_1, \dots, I_r$ are pairwise disjoint non-empty sets and $G$ is the graph with $V(G) = \bigcup_{i = 1}^r I_i$ and $E(G) = \bigcup_{\{i, j\} \in \binom{[r]}{2}} \{xy \colon x \in I_i, \, y \in I_j\}$, then $G$ is called a \emph{complete multipartite graph}, and $I_1, \dots, I_r$ are called the \emph{maximal partite sets of $G$}.

\begin{theorem}\label{thm:cmg}
If $m \geq 1$, $k \geq 2$, $w$ and $w'$ are $m$-weightings of a complete multipartite graph $G$, and $w'$ is uniform-$\alpha$, then $\pi_k(G(w')) \leq \pi_k(G(w))$.
\end{theorem}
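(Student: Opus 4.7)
The plan is to use the join decomposition of $G(w)$, reduce by convexity to a weighting that is uniform on each part, and then iteratively consolidate all weight into a largest part via a key bipartite polynomial inequality. Writing $H_i = (I_i)(w|_{I_i})$ for the disjoint union of cliques $K^v$ ($v \in I_i$), the graph $G(w)$ is the iterated join $H_1 + H_2 + \cdots + H_r$, so that multiplicativity of the clique polynomial $\pi(H; x) := \sum_{k \geq 0} \pi_k(H) x^k$ under join gives
\[
\pi(G(w); x) = \prod_{i=1}^r \pi(H_i; x), \qquad \pi(H_i; x) = 1 + \sum_{v \in I_i} \bigl((1+x)^{w(v)} - 1\bigr).
\]

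First, convexity of $\binom{\cdot}{\ell}$ shows that, for each $\ell \geq 1$ and each fixed $m_i := \sum_{v \in I_i} w(v)$, the coefficient $[x^\ell] \pi(H_i; x) = \sum_{v \in I_i} \binom{w(v)}{\ell}$ is minimized exactly when $w|_{I_i}$ takes only the values $\lfloor m_i / |I_i| \rfloor$ and $\lceil m_i / |I_i| \rceil$, so we may assume $w$ is \emph{partitewise uniform} in this sense. Next, let $I_j$ be a largest part of $G$. If $m_i > 0$ for some $i \neq j$, define $\tilde w$ by zeroing $w$ on $I_i$, leaving $w$ unchanged outside $I_i \cup I_j$, and distributing the combined mass $m_i + m_j$ uniformly on $I_j$. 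Factoring out $\prod_{\ell \neq i, j} \pi(H_\ell; x)$, which has non-negative coefficients, the desired inequality $\pi_k(G(w)) \geq \pi_k(G(\tilde w))$ for every $k$ reduces to the \emph{key bipartite claim}: if $|I_i| \leq |I_j|$, then $\pi(H_i; x) \cdot \pi(H_j; x) \geq \pi(H_j^{\tilde w}; x)$ coefficient-wise. Iterating this merge over all $i \neq j$ with $m_i > 0$ transforms $w$ into the uniform-$\alpha$ weighting $w'$ on $I_j$, settling the theorem modulo the bipartite claim.

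I would prove the bipartite claim by induction on $m_i$, which I rename $m_1$ and set $n_1 = |I_i|, n_2 = |I_j|$. The case $m_1 = 0$ is the convexity reduction on $I_2$ alone. For $m_1 \geq 1$, removing one unit from a vertex $v^\star \in I_1$ of maximum weight produces $\tilde w$ with $\tilde m_1 = m_1 - 1$; the standard identity $\pi_k(H) - \pi_k(H - v) = \pi_{k-1}(H[N_H(v)])$ applied to $G(w)$ identifies the relevant neighborhood subgraph as the join $K_p + H_2$, with $p = \lfloor (m_1 - 1)/n_1 \rfloor$, yielding
\[
\pi_k\bigl(K_{n_1, n_2}(w)\bigr) - \pi_k\bigl(K_{n_1, n_2}(\tilde w)\bigr) = [x^{k-1}] (1+x)^p B_2(x),
\]
where $B_2(x) = \pi(H_2^{w}; x)$. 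The analogous computation on the independent-set side (where the neighborhood collapses to a single clique $K^{v^{\star\star}}$) gives $[x^{k-1}] (1+x)^{p'}$ with $p' = \lfloor (m_1 + m_2 - 1)/n_2 \rfloor$. Combined with the inductive hypothesis, the step reduces to the polynomial inequality $(1+x)^p B_2(x) \geq (1+x)^{p'}$ coefficient-wise.

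The main obstacle is verifying this last polynomial inequality. The case $p \geq p'$ is immediate since $B_2(x) \geq 1$ coefficient-wise. When $p < p'$, set $\delta := p' - p$; using $n_1 \leq n_2$ and the elementary estimate $\lfloor (a+b)/n \rfloor - \lfloor a/n \rfloor \leq \lceil b/n \rceil$, one finds $\delta \leq \lceil m_2 / n_2 \rceil$, which is precisely $\deg B_2(x)$. A short case analysis according to whether $m_2 \bmod n_2$ vanishes then shows that $B_2(x) - (1+x)^{\deg B_2(x)}$ has non-negative coefficients, whence $B_2(x) \geq (1+x)^\delta$ coefficient-wise; multiplying through by $(1+x)^p$ closes the induction. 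I expect that isolating this polynomial inequality and pinning down the sharp bound $\delta \leq \deg B_2$ will be the most delicate step; everything else amounts to combining convexity, the join decomposition of clique polynomials, and the "adding a vertex" identity.
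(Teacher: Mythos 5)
Your argument is correct, but it takes a genuinely different route from the paper's proof. The paper proves this theorem as a direct consequence of its general weight-shifting lemma (Lemma~\ref{lem:main}): one orders the maximal partite sets $I_1, \dots, I_r$ by size and, whenever some $I_s$ with $s > 1$ carries weight, one pairs each $a_{s,i} \in I_s$ with $a_{s-1,i} \in I_{s-1}$ and shifts all weight from $I_s$ into $I_{s-1}$ in one step; the hypotheses of Lemma~\ref{lem:main} are readily checked because $N_G(a_{s-1,i}) \setminus I_s \subseteq N_G(a_{s,i})$ in a complete multipartite graph, and a final application of Lemma~\ref{cor:cauchy} balances the weight on $I_1$. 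Your proof instead exploits the fact that $G(w)$ is the join $H_1 + \cdots + H_r$ of disjoint unions of cliques, so the clique polynomial factors as $\prod_i \pi(H_i; x)$ with $\pi(H_i; x) = 1 + \sum_{v \in I_i}\bigl((1+x)^{w(v)} - 1\bigr)$; you then reduce by convexity of $\binom{\cdot}{\ell}$ to partitewise-balanced $w$ and prove the bipartite merge inequality $\pi(H_i; x)\pi(H_j; x) \geq \pi(H_j^{\tilde w}; x)$ by induction on $m_i$, using the vertex-deletion identity $\pi_k(H) - \pi_k(H-v) = \pi_{k-1}(H[N_H(v)])$ to reduce each step to $(1+x)^p B_2(x) \geq (1+x)^{p'}$. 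I verified the key arithmetic: $p' - p \leq \lceil m_2/n_2 \rceil = \deg B_2$ when $n_1 \leq n_2$, and $B_2(x) - (1+x)^{\deg B_2}$ does have non-negative coefficients in both residue cases of $m_2 \bmod n_2$, so the inequality closes. The paper's argument has the advantage that Lemma~\ref{lem:main} is reused verbatim for the chordal and Sperner-graph theorems, whereas your clique-polynomial approach is tied to the join decomposition and would not transfer to those settings; on the other hand, your proof is self-contained, gives an explicit algebraic identity for $\pi_k(G(w))$, and makes the underlying convexity phenomenon transparent, which the injection in Lemma~\ref{lem:main} somewhat obscures. One small remark: after consolidating all weight into a fixed largest part $I_j$, you should note (as the paper does via Lemma~\ref{cor:cauchy}) that $\pi_k$ is the same for every uniform-$\alpha$ weighting, so that the inequality holds against the arbitrary $w'$ of the statement and not merely against the particular uniform weighting your iteration produces.
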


\begin{remark}\label{rem:cmg} In Section~\ref{sec:proofs}, we show that, furthermore, if $G$ is a complete multipartite graph with a maximal partite set $I$ that is larger than the others, $k \geq 3$, and $m \geq k \alpha_k(G)$, then the answer to Problem~\ref{problem_naif} is negative. Note that Counterexample 1 is the special case where the maximal partite sets of $G$ are $\{a,c\}$ and $\{b\}$.
\end{remark}

Let $\mathcal{K}(G)$ denote the set of vertex sets of cliques of $G$; that is, $\mathcal{K}(G) = \bigcup_{k = 1}^{|V(G)|} \mathcal{K}_k(G)$. A graph $G$ is said to be \emph{chordal} if for some sequence $v_1, \dots, v_n$ such that $V(G) = \{v_1, \dots, v_n\}$ and $n = |V(G)|$, $N_G[v_i] \setminus \{v_j \colon j \in [i-1]\} \in \mathcal{K}(G)$ for each $i \in [n]$.

\begin{theorem}\label{thm:chordal}
If $m \geq 1$, $k \geq 2$, $w$ and $w'$ are $m$-weightings of a chordal graph $G$, and $w'$ is uniform-$\alpha$, then $\pi_k(G(w')) \leq \pi_k(G(w))$.
\end{theorem}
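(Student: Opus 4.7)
My plan is to prove the theorem in three steps: a weight-shift lemma, an inductive reduction to weightings supported on an independent set of $G$, and a short majorization argument showing that among such reduced weightings the minimum of $\pi_k$ is attained at $w^*$.

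The shift lemma asserts that if $H := G[\mathrm{supp}(w)]$ (chordal as an induced subgraph of $G$), $v$ is simplicial in $H$ and $u \in N_H(v)$, then $w' := w - \mathbf{1}_u + \mathbf{1}_v$ satisfies $\pi_k(G(w')) \le \pi_k(G(w))$. To prove it, I use the elementary identity
\[
\pi_k(G(\omega+\mathbf{1}_x)) - \pi_k(G(\omega)) \;=\; \pi_{k-1}\!\bigl(G[N_G[x]](\omega|_{N_G[x]})\bigr),
\]
obtained by counting the new $k$-cliques of $G(\omega+\mathbf{1}_x)$ through the single added vertex in $K^x$. Writing $\tilde w := w - \mathbf{1}_u$, so that $w = \tilde w + \mathbf{1}_u$ and $w' = \tilde w + \mathbf{1}_v$, two applications of the identity yield
\[
\pi_k(G(w')) - \pi_k(G(w)) \;=\; \pi_{k-1}\!\bigl(G[N_G[v]](\tilde w)\bigr) \,-\, \pi_{k-1}\!\bigl(G[N_G[u]](\tilde w)\bigr).
\]
Because $v$ is simplicial in $H$, the set $N_H[v]$ is a clique of $G$ containing $u$, and every other element of $N_H[v]$ is adjacent to $u$ in $G$; hence $N_H[v] \subseteq N_G[u]$. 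Vertices of $N_G[v] \setminus N_H[v]$ carry $\tilde w$-weight zero and contribute nothing to the weighted blow-up, so $G[N_G[v]](\tilde w)$ is a weighted clique that embeds as an induced weighted subgraph of $G[N_G[u]](\tilde w)$; this yields $\pi_{k-1}(G[N_G[v]](\tilde w)) \le \pi_{k-1}(G[N_G[u]](\tilde w))$, establishing the shift lemma.

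I then induct on $|\mathrm{supp}(w)|$. If $G[\mathrm{supp}(w)]$ has no edge, $\mathrm{supp}(w)$ is already independent. Otherwise, a non-trivial connected component of $G[\mathrm{supp}(w)]$---chordal, hence with a simplicial vertex having degree at least one in that component---provides a vertex $v$ simplicial in $H$ with some $u \in N_H(v)$, and applying the shift lemma $w(u)$ consecutive times transfers all of $u$'s weight onto $v$ (the hypotheses remain valid at each intermediate step because $v$ stays simplicial in the updated support-induced subgraph), producing a weighting $w^+$ with $|\mathrm{supp}(w^+)| = |\mathrm{supp}(w)|-1$ and $\pi_k(G(w^+)) \le \pi_k(G(w))$. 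Induction then provides $w^\sharp$ with $\mathrm{supp}(w^\sharp)$ independent in $G$ and $\pi_k(G(w^\sharp)) \le \pi_k(G(w))$. For such $w^\sharp$, the graph $G(w^\sharp)$ is a disjoint union of cliques and $\pi_k(G(w^\sharp)) = \sum_v \binom{w^\sharp(v)}{k}$; the standard inequality $\binom{a+1}{k} + \binom{b-1}{k} \le \binom{a}{k} + \binom{b}{k}$ for $a \ge b$---applied first to equalize weights on $\mathrm{supp}(w^\sharp)$ and then, if $|\mathrm{supp}(w^\sharp)|$ is strictly smaller than the independence number of $G$, to spread weight onto a larger independent set---yields $\pi_k(G(w^\sharp)) \ge \pi_k(G(w^*))$, completing the proof. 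The main technical obstacle is the shift lemma, where the simplicial vertex must be chosen inside $H$ rather than in $G$ (since $G$-simplicial vertices need not have positive weight), and the inclusion $N_H[v] \subseteq N_G[u]$ must be extracted from the clique structure of $N_H[v]$ together with the adjacency $u \in N_H(v)$.
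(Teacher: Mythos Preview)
Your proof is correct, modulo one small slip: the majorization inequality $\binom{a+1}{k}+\binom{b-1}{k}\le\binom{a}{k}+\binom{b}{k}$ holds for $a<b$ (equivalently $a\le b-1$), not for $a\ge b$; with the corrected hypothesis your equalization step goes through and is exactly the content of the paper's Lemmas~\ref{lem:cauchy} and~\ref{cor:cauchy}. The phrase ``spread weight onto a larger independent set'' should also be read as in Lemma~\ref{cor:cauchy}: since $\pi_k$ of an independent-set-supported weighting depends only on the multiset of weights, one may relocate the support inside a maximum independent set before equalizing (your independent support need not itself extend to a maximum one).

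The overall strategy matches the paper's---shift weight onto a simplicial vertex, induct down to an independent support, then equalize---but the implementations differ in two respects. First, your shift lemma moves one unit at a time and is proved via the elementary link identity $\pi_k(G(\omega+\mathbf{1}_x))-\pi_k(G(\omega))=\pi_{k-1}\bigl(G[N_G[x]](\omega)\bigr)$, whereas the paper invokes its general Lemma~\ref{lem:main}, which transfers all of $w(u)$ at once and is proved by an explicit injection $\mathcal{K}_k(G(w'))\hookrightarrow\mathcal{K}_k(G(w))$. Second, you take $v$ simplicial in $H=G[\mathrm{supp}(w)]$, which guarantees a neighbour $u$ of positive weight and lets you induct on $|\mathrm{supp}(w)|$; the paper instead takes $v_1$ simplicial in $G$ via a perfect elimination ordering and inducts on $|V(G)|+|E(G)|$, which forces a separate case split on whether $N_G(v_1)=\emptyset$. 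Your route is slightly more self-contained for this particular theorem, while the paper's route reuses a single shifting lemma across all three of its structural results (Sperner graphs, complete multipartite graphs, and chordal graphs).
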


In view of Theorems~\ref{thm:sperner_min}--\ref{thm:chordal}, one may wonder whether the minimum of $\pi_k(G(w))$ is always attained when $w$ is uniform-$\alpha$. We show this to be false for $k=3$, using a result on triangle-free graphs (graphs containing no $3$-clique) due to Bohman \cite{Bohman}.

\begin{theorem}\label{thm:con_stable_false}
There exist a graph $G$, a positive integer $m$, and an $m$-weighting $w$ of $G$ such that $\pi_3(G(w)) < \pi_3(G(w'))$ for every uniform-$\alpha$ $m$-weighting $w'$ of $G$.
\end{theorem}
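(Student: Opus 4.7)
The plan is to exhibit a triangle-free graph $G$ whose independence number is small compared to $|V(G)|$, and then use the trivial all-ones weighting to produce no triangle at all, while forcing every uniform-$\alpha$ weighting to pile up enough weight on a single vertex to create a blown-up triangle.

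First I would invoke Bohman's theorem on the triangle-free process, which guarantees that for all sufficiently large $n$ there exists a triangle-free graph $G$ on $n$ vertices with $\alpha(G) = O(\sqrt{n \log n})$. In particular, $n$ can be chosen large enough that $\alpha(G) < n/2$; fix any such $G$. Next I would set $m = n$ and let $w$ be the all-ones weighting, i.e.\ $w(v) = 1$ for every $v \in V(G)$. Since each vertex is then ``blown up'' into a single vertex and the edge set is preserved, $G(w) \cong G$, and hence $\pi_3(G(w)) = \pi_3(G) = 0$ by the triangle-freeness of $G$.

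Finally I would check that every uniform-$\alpha$ $m$-weighting $w'$ produces at least one triangle in $G(w')$. Such a $w'$ is supported on some largest independent set $I$ of $G$ with $|I| = \alpha(G)$, and distributes the total weight $m = n$ onto $I$ with values in $\{\lfloor n/\alpha(G) \rfloor, \lceil n/\alpha(G) \rceil\}$. The strict inequality $\alpha(G) < n/2$ forces $\lceil n/\alpha(G) \rceil \geq 3$, so some $u \in I$ satisfies $w'(u) \geq 3$, and the blown-up clique $K^u$ alone contributes at least $\binom{3}{3} = 1$ triangle to $G(w')$. Hence $\pi_3(G(w')) \geq 1 > 0 = \pi_3(G(w))$, as required.

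The only non-routine ingredient is the existence of a triangle-free graph with independence number strictly below $n/2$; this is the main obstacle, and it is precisely what Bohman's theorem (with the much stronger bound $\alpha(G) = O(\sqrt{n \log n})$) supplies. Everything else reduces to the pigeonhole observation that distributing $m = n$ units of weight across fewer than $n/2$ vertices must leave at least one of them with weight $\geq 3$, together with the trivial computation that blowing up a triangle-free graph by $1$ at every vertex produces no new triangles.
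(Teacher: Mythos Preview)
Your proof is correct and takes a genuinely simpler route than the paper's. The paper sets $m = 3n$ with weight $3$ on every vertex, obtaining $\pi_3(G(w_n)) = n + 18|E(G_n)|$, and then compares this asymptotically with the lower bound $\binom{\lfloor 3n/\alpha_n\rfloor}{3}\alpha_n$ for the uniform-$\alpha$ side; to win that comparison it must control not only $\alpha_n$ but also $|E(G_n)|$ (via $\Delta(G_n) = \Theta(\sqrt{n\log n})$), which is where the full strength of Bohman's result enters. Your choice $m = n$ with the all-ones weighting makes $\pi_3(G(w))$ exactly zero, so the edge count never appears and the only requirement left is $\alpha(G) < n/2$. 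This is a far weaker hypothesis than you need Bohman for: in fact $G = C_5$ already suffices (triangle-free, $n=5$, $\alpha = 2 < 5/2$; any uniform-$\alpha$ $5$-weighting places weight $3$ on one vertex, giving $\pi_3(G(w')) = 1$). So your argument is both shorter and strictly more elementary than the paper's, and your closing remark that supplying such a graph ``is the main obstacle'' overstates the difficulty.
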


We propose the following conjecture.

\begin{conjecture}\label{con:main}
If $m \geq 1$, $k \geq 2$, and $G$ is a graph, then for some $k$-clique independent set $I$ of $G$ and some $m$-weighting $w'$ of $G$ that is uniform on $I$, $\pi_k(G(w')) \leq \pi_k(G(w))$ for any $m$-weighting $w$ of $G$.
\end{conjecture}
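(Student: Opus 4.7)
The conjecture is an existence statement, so I would start from a global minimizer $w^*$ of $\pi_k(G(w))$ over all $m$-weightings $w$ of $G$ (which exists because the feasible set is finite) and exhibit a sequence of local moves---unit shifts of the form $w(v) \mapsto w(v) - 1$, $w(u) \mapsto w(u) + 1$---that do not increase $\pi_k$ and terminate at a weighting uniform on some $k$-clique independent set $I$ of $G$. This splits naturally into two subclaims: (T1) one may assume $\supp(w^*) = \{v \in V(G) : w^*(v) > 0\}$ contains no $k$-clique of $G$, i.e.\ it is itself a $k$-clique independent set; (T2) assuming (T1), one may further assume $|w^*(u) - w^*(v)| \leq 1$ for all $u, v \in \supp(w^*)$.

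For (T1), suppose $\supp(w^*)$ contains a $k$-clique $C = \{v_1, \dots, v_k\}$. Using the explicit formula
\[ \pi_k(G(w)) = \sum_{i=1}^{k} \sum_{\{v_1,\dots,v_i\} \in \mathcal{K}_i(G)} \sum_{\substack{k_1+\cdots+k_i = k\\ k_j \geq 1}} \prod_{j=1}^{i} \binom{w(v_j)}{k_j}, \]
compute the change $\Delta = \pi_k(G(\tilde w)) - \pi_k(G(w^*))$ produced by a unit shift from some $v_\ell \in C$ to a carefully chosen vertex $u$; only terms indexed by cliques of $G$ through $v_\ell$ or $u$ contribute, so $\Delta$ is expressible in closed form. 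The natural candidates for $u$ are (i) a vertex outside $\supp(w^*)$ and (ii) a vertex $u \in \supp(w^*) \setminus C$ whose closed neighbourhood within $\supp(w^*)$ is contained in that of $v_\ell$ (so every newly created clique through $u$ can be matched with a destroyed clique through $v_\ell$). The goal is to show that some choice of $(\ell, u)$ always yields $\Delta \leq 0$.

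For (T2), once $\supp(w^*) = I$ is $k$-clique independent, the formula for $\pi_k(G(w^*))$ restricts to $i \leq k-1$. Given $u, v \in I$ with $w^*(u) \geq w^*(v) + 2$, a unit shift from $u$ to $v$ changes each relevant term by a difference of binomial products; grouping the terms by the common factor $\prod_{j \neq u, v} \binom{w^*(v_j)}{k_j}$ and invoking the convexity (and supermodularity-type inequalities) of $x \mapsto \binom{x}{k_j}$, one hopes to show $\Delta \leq 0$ for a suitable choice of $v$ (e.g.\ one interacting with $u$ through as few small cliques of $G[I]$ as possible). Iterating this step drives $w^*$ to uniformity on $I$.

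The main obstacle is (T1). A single local shift simultaneously destroys some $k$-cliques of $G(w^*)$ through $v_\ell$ and may create new ones through $u$, and balancing these contributions without structural assumptions on $G$ is genuinely delicate: the proofs of Theorems~\ref{thm:sperner_min}, \ref{thm:cmg} and \ref{thm:chordal} each exploit very specific features of the graph class. It seems likely that a general proof will require either a more global technique---such as symmetrization under $\Aut(G)$, continuous relaxation to real-valued weights followed by a rounding argument, or induction on $|V(G)|$ via a well-chosen vertex deletion---or a substantial refinement of the shifting strategy that handles arbitrary graphs uniformly.
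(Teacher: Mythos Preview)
This statement is Conjecture~\ref{con:main}; the paper does \emph{not} prove it, so there is no ``paper's own proof'' to compare against. Your proposal is therefore an attack on an open problem, and it should be read as a strategy outline rather than a proof.

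That said, there is a concrete gap in (T2) that you underestimate. You write that once $\supp(w^*)=I$ is $k$-clique independent, unit shifts from a heavy vertex $u$ to a light vertex $v$ can be shown non-increasing via ``convexity of $x\mapsto\binom{x}{k_j}$''. This is false in general when $I$ contains edges (which it may, for $k\ge 3$). Take $G=P_3$ on $\{a,b,c\}$ with edges $ab,bc$, and $k=3$: then $I=V(G)$ is $3$-clique independent, and
\[
\pi_3(G(w))=\binom{w(a)+w(b)}{3}+\binom{w(b)+w(c)}{3}-\binom{w(b)}{3}.
\]
At $(w(a),w(b),w(c))=(4,1,4)$ this equals $20$; the only balancing shift available (since $w(a)=w(c)$) is $a\to b$, giving $(3,2,4)$ with value $30$. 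So the balancing move strictly \emph{increases} $\pi_3$, and there is no ``suitable choice of $v$'' to rescue the step. This is exactly the phenomenon behind Counterexample~\ref{c-ex}. The point is that $\pi_k$ restricted to weightings on a $k$-clique independent set with internal edges is not Schur-convex in the way your argument needs; the cross terms $\prod_j\binom{w(v_j)}{k_j}$ with $i\ge 2$ can dominate.

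Consequently, even if (T1) could be carried out, it would have to land you not merely on \emph{some} $k$-clique independent support but on one where the minimum over weightings supported there is actually uniform---and that is essentially the content of the conjecture. Your closing paragraph correctly flags (T1) as hard, but (T2) is not the routine step you present it as; both require genuinely new ideas.
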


The paper is organized as follows. Section~\ref{sec:basiclemmas} contains basic tools that are used in the proofs of Theorems~\ref{thm:sperner_min}--\ref{thm:chordal}. In Section~\ref{sec:shifting_lemma}, we establish a general weight shifting lemma from which Theorems~\ref{thm:sperner_min}--\ref{thm:chordal} follow, and we prove Theorems~\ref{thm:cmg} and \ref{thm:chordal}. In Section~\ref{sec:proofs}, we prove our main results, given by Theorems~\ref{thm:sperner_min} and \ref{thm:con_stable_false} and the claims in Remarks~\ref{rem:sperner_min} and \ref{rem:cmg}.


\section{Basic lemmas}\label{sec:basiclemmas}

The following known fact is very useful, and we prove it for completeness.

\begin{lemma}\label{lem:cauchy}
If $n \geq 1$, $m \geq 1$, $k \geq 2$, $w$ and $w'$ are $m$-weightings of an $n$-vertex graph $G$ with no edges, and $w'$ is uniform-$\alpha$, then $\pi_k(G(w')) \leq \pi_k(G(w))$. 
\end{lemma}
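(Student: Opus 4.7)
The plan is to reduce the statement to the classical convexity inequality $\sum_v \binom{w(v)}{k} \geq \sum_v \binom{w'(v)}{k}$ for a balanced $w'$, and then dispatch it with a shifting argument. First I would unfold the definition of $G(w)$ using the hypothesis $E(G) = \emptyset$: the set $E(G(w))$ is the union over $v \in V(G)$ of $\binom{\{(v,i) : i \in [w(v)]\}}{2}$, together with cross-edges indexed by the empty set $E(G)$, so $G(w)$ is the vertex-disjoint union of the cliques $K^v$ of orders $w(v)$. Every $k$-clique of $G(w)$ therefore sits inside a single $K^v$, giving
$$\pi_k(G(w)) = \sum_{v \in V(G)} \binom{w(v)}{k},$$
and analogously for $w'$. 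Next I would note that in an edgeless graph the unique maximum independent set is $V(G)$ itself, so $w'$ being uniform-$\alpha$ simply means $w'(v) \in \{\lfloor m/n \rfloor, \lceil m/n \rceil\}$ for every $v$, with the number of ceilings forced by the constraint $\sum_v w'(v) = m$.

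The crux is then to show that such a balanced weighting minimizes $\sum_v \binom{w(v)}{k}$. I would proceed by a standard smoothing step: whenever $w(u) \geq w(v) + 2$, transferring one unit of weight from $u$ to $v$ changes the sum by
$$\binom{w(u){-}1}{k} + \binom{w(v){+}1}{k} - \binom{w(u)}{k} - \binom{w(v)}{k} \;=\; \binom{w(v)}{k-1} - \binom{w(u){-}1}{k-1} \;\leq\; 0,$$
by two applications of Pascal's identity, since $w(u)-1 \geq w(v)+1 \geq w(v)$ and $x \mapsto \binom{x}{k-1}$ is non-decreasing on $\mathbb{N}_0$. Each such shift strictly decreases $\sum_v w(v)^2$, so the procedure terminates after finitely many steps in a weighting whose values all differ by at most $1$. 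The constraints $\sum_v w(v) = m$ and $|V(G)| = n$ then force this terminal weighting to have exactly the same multiset of values as $w'$, hence the same $\pi_k$, yielding $\pi_k(G(w)) \geq \pi_k(G(w'))$.

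There is no genuine obstacle here: everything reduces to the Pascal identity above and the routine termination of the smoothing procedure. The only care required is in verifying the sign of the binomial difference and in confirming that any "balanced" integer weighting of $V(G)$ with total $m$ has the same shape as $w'$; both are immediate from the constraints.
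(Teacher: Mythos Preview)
Your proposal is correct and follows essentially the same approach as the paper: both reduce to $\pi_k(G(w))=\sum_v\binom{w(v)}{k}$ and then smooth by unit transfers, using Pascal's identity to check that each transfer does not increase the sum. Your termination argument via the potential $\sum_v w(v)^2$ is a slightly cleaner packaging than the paper's explicit two-phase iteration, but the content is the same.
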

\begin{remark} \label{rem:cauchy}
Let $k$, $G$, and $w'$ be as in Lemma~\ref{lem:cauchy}. Thus, $w'(v) \in \{\lfloor m/n \rfloor, \lceil m/n \rceil\}$ for each $v \in V(G)$. By the division algorithm, $m = \lfloor m/n \rfloor n + r$ for some $r \in \{0\} \cup [n-1]$. If $r = 0$, then $\lfloor m/n \rfloor = m/n = \lceil m/n \rceil$. Suppose $r \neq 0$. Then, $m/n > \lfloor m/n \rfloor = \lceil m/n \rceil - 1$. Since $\sum_{v \in V(G)} w'(v) = m$, we obtain $|\{v \in V(G) \colon w'(v) = \lfloor m/n \rfloor\}| = n-r$ and $|\{v \in V(G) \colon w'(v) = \lceil m/n \rceil\}| = r$. Therefore, if $w_1$ and $w_2$ are uniform-$\alpha$ $m$-weightings of $G$, then $\pi_k(G(w_1)) = \pi_k(G(w_2))$.
\end{remark}

\begin{proof}[\textbf{Proof of Lemma~\ref{lem:cauchy}}] Let $v_1, v_2 \in V(G)$ such that $w(v_1) = \min\{w(v) \colon v \in V(G)\}$ and $w(v_2) = \max\{w(v) \colon v \in V(G)\}$. Since $\sum_{v \in V(G)} w(v) = m$, we have $w(v_1) \leq m/n$ and $w(v_2) \geq m/n$. Since $w(v) \in \mathbb{N}_0$ for each $v \in V(G)$, $w(v_1) \leq \lfloor m/n \rfloor$ and $w(v_2) \geq \lceil m/n \rceil$. If $w(v_1) = \lfloor m/n \rfloor$ and $w(v_2) = \lceil m/n \rceil$, then $w$ is uniform-$\alpha$, so $\pi_k(G(w')) = \pi_k(G(w))$ by Remark~\ref{rem:cauchy}. Suppose $w(v_1) \neq \lfloor m/n \rfloor$ or $w(v_2) \neq \lceil m/n \rceil$. Then, $w(v_1) \leq \lfloor m/n \rfloor - 1$ or $w(v_2) \geq \lceil m/n \rceil + 1$.

Suppose $w(v_1) \leq \lfloor m/n \rfloor - 1$. Let $w_1$ be the $m$-weighting of $G$ such that $w_1(v_1) = w(v_1)+1$, $w_1(v_2) = w(v_2)-1$, and $w_1(v) = w(v)$ for each $v \in V(G) \setminus \{v_1, v_2\}$. We have
\begin{align}
\pi_k(G(w)) - \pi_k(G(w_1)) &= \binom{w(v_2)}{k} - \binom{w(v_2)-1}{k} + \binom{w(v_1)}{k} - \binom{w(v_1) + 1}{k} \nonumber \\ 
&= \binom{w(v_2) - 1}{k-1} - \binom{w(v_1)}{k-1} \geq 0 \label{perturbation1}
\end{align}
\noindent as $w(v_2) - 1 \geq w(v_1)$ and $k-1 \geq 1$. Thus, $\pi_k(G(w)) \geq \pi_k(G(w_1))$. We apply this procedure until we obtain an $m$-weighting $w_p$ of $G$ such that $\min\{w_p(v) \colon v \in V(G)\} = \lfloor m/n \rfloor$. We have $\pi_k(G(w)) \geq \pi_k(G(w_p))$. Let $v_{p,1}, v_{p,2} \in V(G)$ such that $w_p(v_{p,1}) = \lfloor m/n \rfloor$ and $w_p(v_{p,2}) = \max\{w_p(v) \colon v \in V(G)\}$. If $w_p(v_{p,2}) = \lceil m/n \rceil$, then $w_p$ is uniform-$\alpha$, so $\pi_k(G(w')) = \pi_k(G(w_p))$ by Remark~\ref{rem:cauchy}. Suppose $w_p(v_{p,2}) \neq \lceil m/n \rceil$. Since $w_p(v_{p,2}) \geq \lceil m/n \rceil$, we obtain $w_p(v_{p,2}) \geq \lceil m/n \rceil + 1$. Since $\sum_{v \in V(G)} w_p(v) = m$, $m/n > \lfloor m/n \rfloor = w_p(u)$ for some $u \in V(G) \setminus \{v_{p,1}\}$. Let $w_{p+1}$ be the $m$-weighting of $G$ such that $w_{p+1}(u) = w_{p}(u)+1$, $w_{p+1}(v_{p,2}) = w_p(v_{p,2})-1$, and $w_{p+1}(v) = w_p(v)$ for each $v \in V(G) \setminus \{u, v_{p,2}\}$. As in (\ref{perturbation1}), we obtain $\pi_k(G(w_p)) \geq \pi_k(G(w_{p+1}))$. We apply this procedure until we obtain an $m$-weighting $w_q$ of $G$ such that $\max\{w_q(v) \colon v \in V(G)\} = \lceil m/n \rceil$. Since $w_q(v_{p,1}) = \lfloor m/n \rfloor = \min\{w_q(v) \colon v \in V(G)\}$, $w_q$ is uniform-$\alpha$, so $\pi_k(G(w')) = \pi_k(G(w_q))$ by Remark~\ref{rem:cauchy}.

If $w(v_2) \geq \lceil m/n \rceil + 1$, then $\pi_k(G(w')) \leq \pi_k(G(w))$ by a similar argument.
\end{proof}

\begin{lemma} \label{cor:cauchy} If $n \geq 1$, $m \geq 1$, $k \geq 2$, $w$ and $w'$ are $m$-weightings of an $n$-vertex graph $G$, $I$ and $I'$ are independent sets of $G$ with $|I| \leq |I'|$, $w(v) = 0$ for each $v \in V(G) \setminus I$, and $w'$ is uniform on $I'$, then $\pi_k(G(w')) \leq \pi_k(G(w))$.
\end{lemma}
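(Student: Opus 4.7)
The plan is to reduce this to Lemma~\ref{lem:cauchy} applied to an edgeless graph, by exploiting the fact that when a weighting is supported on an independent set, the blow-up graph decomposes into a disjoint union of cliques.

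First I would observe the following structural fact. Since $I$ is an independent set of $G$ and $w(v) = 0$ for every $v \in V(G) \setminus I$, the graph $G(w)$ contains no edges between distinct $K^u$ and $K^v$ (any such edge would require $uv \in E(G)$ with $u, v \in I$). Therefore $G(w)$ is the disjoint union of the cliques $K^v$ of sizes $w(v)$ for $v \in I$, which gives
$$\pi_k(G(w)) = \sum_{v \in I} \binom{w(v)}{k}.$$
Since $w'$ is uniform on $I'$, the same reasoning yields
$$\pi_k(G(w')) = \sum_{v \in I'} \binom{w'(v)}{k}.$$

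Next I would transfer the problem onto a common edgeless graph. Let $H$ be the graph with $V(H) = I'$ and $E(H) = \emptyset$; note that $V(H)$ is itself the largest independent set of $H$, so any $m$-weighting of $H$ that is uniform on $V(H)$ is uniform-$\alpha$. Since $|I| \leq |I'|$, fix an injection $\phi \colon I \hookrightarrow I'$ and define an $m$-weighting $w^*$ of $H$ by $w^*(\phi(v)) = w(v)$ for each $v \in I$ and $w^*(u) = 0$ for each $u \in I' \setminus \phi(I)$. Then
$$\pi_k(H(w^*)) = \sum_{u \in I'} \binom{w^*(u)}{k} = \sum_{v \in I} \binom{w(v)}{k} = \pi_k(G(w)).$$
Similarly, the restriction $\widetilde{w} := w'|_{I'}$ is an $m$-weighting of $H$, it is uniform on $V(H) = I'$, and
$$\pi_k(H(\widetilde{w})) = \sum_{v \in I'} \binom{w'(v)}{k} = \pi_k(G(w')).$$

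Finally, applying Lemma~\ref{lem:cauchy} to the edgeless graph $H$ with the uniform-$\alpha$ weighting $\widetilde{w}$ and the arbitrary $m$-weighting $w^*$ gives $\pi_k(H(\widetilde{w})) \leq \pi_k(H(w^*))$, which by the two identities above is exactly $\pi_k(G(w')) \leq \pi_k(G(w))$. There is no real obstacle here: the lemma is purely a bookkeeping reduction, and the only mild point to check is that the injection $\phi$ exists (provided by $|I| \leq |I'|$) and that $V(H)$ is indeed a maximum independent set of $H$ so that the hypothesis of Lemma~\ref{lem:cauchy} is met.
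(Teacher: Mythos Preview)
Your proof is correct and takes essentially the same approach as the paper: both reduce to Lemma~\ref{lem:cauchy} on the edgeless graph $H$ with vertex set $I'$, transporting $w$ to $H$ via an injection $I \hookrightarrow I'$ and restricting $w'$ to $I'$. The paper writes $H = G[I']$ (which is edgeless since $I'$ is independent) and just asserts the equalities $\pi_k(H(w_H)) = \pi_k(G(w))$ and $\pi_k(H(w_H')) = \pi_k(G(w'))$, while you spell out the underlying formula $\pi_k(G(w)) = \sum_{v \in I} \binom{w(v)}{k}$ explicitly; this is the same argument with slightly different packaging.
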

\begin{proof} 
Let $u_1, \dots, u_r$ be the distinct vertices in $I$. Let $v_1, \dots, v_s$ be the distinct vertices in $I'$. Then, $r \leq s$. Let $H = G[I']$. Let $w_H$ be the $m$-weighting of $H$ such that $w_H(v_i) = w(u_i)$ for each $i \in [r]$ and $w_H(v_j) = 0$ for each $j \in [s] \setminus [r]$. Let $w_H'$ be the uniform-$\alpha$ $m$-weighting of $H$ such that $w_H'(v_i) = w'(v_i)$ for each $i \in [s]$. By Lemma~\ref{lem:cauchy}, $\pi_k(H(w_H')) \leq \pi_k(H(w_H))$. Since $\pi_k(H(w_H')) = \pi_k(G(w'))$ and $ \pi_k(H(w_H)) = \pi_k(G(w))$, the result follows. 
\end{proof}


\section{A weight shifting lemma}\label{sec:shifting_lemma}

In the proof of Theorem~\ref{thm:katona}, Katona defined the following weight shifting operation along an edge. For a graph $G$, an $m$-weighting $w$ of $G$, and an edge $ab$ of $G$, let $w_{ab}$ be the $m$-weighting of $G$ given by
$$
w_{ab}(v)=\left\{\begin{array}{ll}0 \quad & \textrm{if $v =  a$},\\ 
w(b) + w(a) \quad & \textrm{if $v = b$},\\
w(v) \quad & \textrm{otherwise}\end{array}\right.
$$
\noindent for each $v \in V(G)$. It was proved in~\cite{Katona} that $\pi_2(G(w_{ab})) \leq \pi_2(G(w))$ or $\pi_2(G(w_{ba})) \leq \pi_2(G(w))$. Thus, by applying the weight shift operation repeatedly, one arrives at an $m$-weighting $w'$ of $G$ such that $\pi_2(G(w')) \leq \pi_2(G(w))$ and $w'$ is non-zero only on an independent set. Theorem~\ref{thm:katona} follows from this.

\begin{remark}
Unfortunately, for $k \geq 3$, Katona's shifting technique does not always decrease $\pi_k(G(w))$ or leave it unchanged. For instance, for $k=3$, consider the illustration in Figure~\ref{fig:shift_edge}. If we start with the $m$-weighting in Figure~\ref{fig:shift_edge}(a), then each shift produces a larger number of triangles, as demonstrated in Figures~\ref{fig:shift_edge}(b) and~\ref{fig:shift_edge}(c). 

\begin{figure}[ht]
\centering
\begin{subfigure}{0.3\textwidth}
\centering
\scalebox{0.7}{\begin{tikzpicture}
	\renewcommand{\theta}{-30}
	\node[draw,circle] (0) at (0 + \theta:1) {$1$};
	\node[draw,circle] (1) at (360 / 3 + \theta:1) {$1$};
	\node[draw,circle] (2) at (2 * 360 / 3 + \theta:1) {$1$};
	\node[draw,circle] (3) at (0 + \theta:3) {$1$};
	\node[draw,circle] (4) at (360 / 3 + \theta:3) {$1$};
	\node[draw,circle] (5) at (2 * 360 / 3 + \theta:3) {$1$};

	\draw[-] (0) to (1) to (2) to (0);
	\draw[-] (3) to (4) to (5) to (3);
	\draw[-] (0) -- (3);
	\draw[-] (1) -- (4);
	\draw[-] (2) -- (5);
\end{tikzpicture}}
\caption{$\pi_3(G(w)) = 2$\label{fig:shift_edge:a}}
\end{subfigure}
\hfill
\begin{subfigure}{0.3\textwidth}
\centering
\scalebox{0.7}{\begin{tikzpicture}
	\renewcommand{\theta}{-30}
	\node[draw,circle] (0) at (0 + \theta:1) {$2$};
	\node[draw,circle] (1) at (360 / 3 + \theta:1) {$0$};
	\node[draw,circle] (2) at (2 * 360 / 3 + \theta:1) {$1$};
	\node[draw,circle] (3) at (0 + \theta:3) {$1$};
	\node[draw,circle] (4) at (360 / 3 + \theta:3) {$1$};
	\node[draw,circle] (5) at (2 * 360 / 3 + \theta:3) {$1$};

	\draw[-] (0) to (1) to (2) to (0);
	\draw[-] (3) to (4) to (5) to (3);
	\draw[-] (0) -- (3);
	\draw[-] (1) -- (4);
	\draw[-] (2) -- (5);
\end{tikzpicture}}
\caption{$\pi_3(G(w)) = 3$}\label{fig:shift_edge:b}
\end{subfigure}
\hfill
\begin{subfigure}{0.3\textwidth}
\centering
\scalebox{0.7}{\begin{tikzpicture}
	\renewcommand{\theta}{-30}
	\node[draw,circle] (0) at (0 + \theta:1) {$1$};
	\node[draw,circle] (1) at (360 / 3 + \theta:1) {$0$};
	\node[draw,circle] (2) at (2 * 360 / 3 + \theta:1) {$1$};
	\node[draw,circle] (3) at (0 + \theta:3) {$1$};
	\node[draw,circle] (4) at (360 / 3 + \theta:3) {$2$};
	\node[draw,circle] (5) at (2 * 360 / 3 + \theta:3) {$1$};

	\draw[-] (0) to (1) to (2) to (0);
	\draw[-] (3) to (4) to (5) to (3);
	\draw[-] (0) -- (3);
	\draw[-] (1) -- (4);
	\draw[-] (2) -- (5);
\end{tikzpicture}}
\caption{$\pi_3(G(w)) = 4$}\label{fig:shift_edge:c}
\end{subfigure}
\caption{A weight shift along an edge
\label{fig:shift_edge}}
\end{figure}
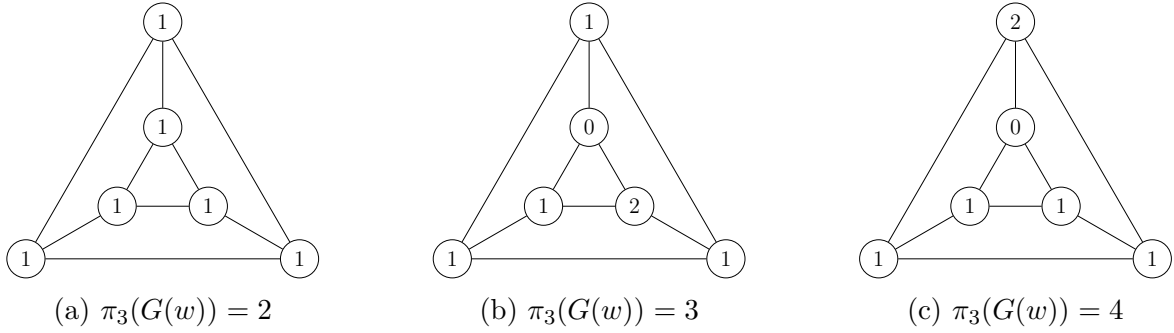
\end{remark}
 
In view of the remark above, new ideas would therefore be needed to address Problem~\ref{problem_naif}. Still, instead of shifting along one edge at a time, is shifting simultaneously along many edges at a time conceivable? In our main lemma, we show that this is indeed the case, provided a number of conditions are satisfied. 

\begin{lemma}\label{lem:main}
If $m \geq 1$, $k \geq 2$, $w$ is an $m$-weighting of a graph $G$, $A = \{a_1,\ldots,a_r\}$ and $B = \{b_1,\ldots,b_r\}$ are disjoint $r$-element subsets of $V(G)$ such that
	\begin{enumerate}
		\item $a_i b_i \in E(G)$ for each $i \in [r]$,
		\item $B$ is an independent set of $G$, and  
		\item $N_G(b_i) \setminus \pa{A \cup \{v \in V(G) \colon w(v) = 0\}} \subseteq N_G(a_i)$ for each $i \in [r]$,
	\end{enumerate}
\noindent and $w'$ is the $m$-weighting of $G$ given by
$$
w'(v)=\left\{
	\begin{array}{ll}
		0 & \textrm{if $v \in A$,}\\
		w(b_i) + w(a_i) \quad & \textrm{if $v = b_i$ for some $i \in
		[r]$,}\\
		w(v) & \textrm{otherwise}
	\end{array}\right.
$$
\noindent for each $v \in V(G)$, then $\pi_k(G(w')) \leq \pi_k(G(w))$.
\end{lemma}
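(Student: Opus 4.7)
My plan is to start from the explicit counting formula
$$\pi_k(G(u)) = \sum_{\phi} \prod_{v \in V(G)} \binom{u(v)}{\phi(v)},$$
where the sum is over functions $\phi : V(G) \to \mathbb{N}_0$ with $\sum_{v} \phi(v) = k$ and $\supp(\phi) \in \mathcal{K}(G)$: a $k$-clique $C$ of $G(u)$ is determined by its profile $\phi(v) = |C \cap K^v|$, and the contribution of each profile is $\prod_v \binom{u(v)}{\phi(v)}$, with the usual convention that $\binom{u(v)}{\phi(v)} = 0$ whenever $\phi(v) > u(v)$. This is just a reindexing of the formula in the introduction.

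Applying this with $u = w'$: since $w'(a_i) = 0$, any profile $\phi'$ giving a nonzero term satisfies $\phi'(a_i) = 0$; and since $w'(b_i) = w(a_i) + w(b_i)$, Vandermonde's identity yields
$$\binom{w'(b_i)}{\phi'(b_i)} = \sum_{j_i = 0}^{\phi'(b_i)} \binom{w(a_i)}{j_i}\binom{w(b_i)}{\phi'(b_i) - j_i}.$$
Expanding every such factor and reparametrising via $\phi(a_i) := j_i$, $\phi(b_i) := \phi'(b_i) - j_i$ (and $\phi(v) := \phi'(v)$ otherwise) rewrites
$$\pi_k(G(w')) = \sum_{\phi \in \Phi'} \prod_v \binom{w(v)}{\phi(v)},$$
where $\Phi'$ is the set of $\phi : V(G) \to \mathbb{N}_0$ with $\sum_v \phi(v) = k$ whose ``collapsed'' function $\phi'$ (obtained by transferring each $\phi(a_i)$ onto $b_i$) has support a clique of $G$. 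Comparing with $\pi_k(G(w)) = \sum_{\phi \in \Phi} \prod_v \binom{w(v)}{\phi(v)}$, where $\Phi$ is the analogous set with $\supp(\phi)$ itself a clique, the lemma reduces to showing that every $\phi \in \Phi'$ with $\prod_v \binom{w(v)}{\phi(v)} > 0$ already lies in $\Phi$.

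This is where the three hypotheses come in. Fix such a $\phi$, let $C := \supp(\phi) \setminus (A \cup B)$, and let $I := \{i : \phi(a_i) + \phi(b_i) > 0\}$. Note $\supp(\phi') \cap B = \{b_i : i \in I\}$; since $\supp(\phi')$ is a clique of $G$ while $B$ is independent (hypothesis 2), we must have $|I| \leq 1$, say $I \subseteq \{i_0\}$. If $I = \emptyset$ then $\supp(\phi) = C = \supp(\phi')$ is already a clique. Otherwise $\supp(\phi') = C \cup \{b_{i_0}\}$ is a clique, hence $C \subseteq N_G(b_{i_0})$; the positivity assumption $\phi(v) \leq w(v)$ forces $w(v) > 0$ for every $v \in C$, and $C \cap A = \emptyset$ by construction, so hypothesis 3 gives $C \subseteq N_G(a_{i_0})$. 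Combined with hypothesis 1 ($a_{i_0} b_{i_0} \in E(G)$), this shows $\supp(\phi) \subseteq C \cup \{a_{i_0}, b_{i_0}\}$ is a clique of $G$, completing the argument.

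The main obstacle is committing to the right bookkeeping: once one switches to the profile viewpoint and performs the Vandermonde expansion simultaneously on all $r$ vertices of $B$, the $r$ weight shifts collapse into a single term-by-term inequality, and the clique-preservation check reduces to the short case analysis above.
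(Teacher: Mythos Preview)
Your proof is correct and takes a genuinely different route from the paper. The paper builds an explicit vertex bijection $\phi : V(G(w')) \to V(G(w))$ that sends the ``extra'' copies of each $b_i$ (those with index exceeding $w(b_i)$) to copies of $a_i$, and then checks case by case that this map carries $k$-cliques of $G(w')$ to $k$-cliques of $G(w)$; injectivity of the induced map on cliques is immediate because $\phi$ is a bijection on vertices. Your argument is the algebraic shadow of this: working at the level of clique profiles rather than individual cliques, you replace the vertex bijection by a Vandermonde expansion of each factor $\binom{w(a_i)+w(b_i)}{\phi'(b_i)}$, which is exactly the generating-function encoding of the same redistribution of copies. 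The final clique-verification step ($|I|\le 1$ from $B$ independent, then $C \subseteq N_G(a_{i_0})$ from hypothesis~3, then $a_{i_0}b_{i_0}\in E(G)$ from hypothesis~1) is essentially identical in both proofs. What your approach buys is a cleaner reduction---a single containment $\{\phi \in \Phi' : \text{term} > 0\} \subseteq \Phi$ between index sets rather than a clique-by-clique edge check---and it would adapt more readily to weighted or polynomial variants; the paper's bijection is more concrete and makes the combinatorial content visible without invoking any identities.
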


\begin{proof}
Our strategy is to associate a unique $k$-clique of $G(w)$ to each $k$-clique of $G(w')$. More precisely, we construct an injective function from $\mathcal{K}_k(G(w'))$ to $\mathcal{K}_k(G(w))$. This gives us $\pi_k(G(w')) \leq \pi_k(G(w))$. 
	
Let $\phi : V(G(w'))~\rightarrow~V(G(w))$ such that, for each $(v,i) \in V(G(w'))$,  
	$$
	\phi((v,i)) = \left\{
		\begin{array}{ll}
			(a_j, i - w(b_j)) & \textrm{if $v = b_j$ and $i \in [w(b_j) + w(a_j)] \setminus [w(b_j)]$,} \\
			(v, i) & \textrm{otherwise.}
		\end{array}\right.
	$$
Note that $\phi$ is bijective. We may abbreviate $\phi((v,i))$ to $\phi(v,i)$. 

Consider any $X \in \mathcal{K}_k(G(w'))$. Thus, $G(w')[X]$ is a $k$-clique of $G(w')$. Let $(v_1,y_1)$, $\dots$, $(v_k,y_k)$ be the vertices in $X$. Let $\phi(X)$ denote $\{\phi(v_1,y_1),\ldots,\phi(v_k,y_k)\}$. Since $\phi$ is injective, $|\phi(X)|= k$. We show that $\phi(X) \in \mathcal{K}_k(G(w))$, that is, $\phi(v_i,y_i)\phi(v_j,y_j) \in E(G(w))$ for every $i, j \in [k]$ with $i \neq j$. If $\phi(v_i,y_i) = (v_i,y_i)$ and $\phi(v_j,y_j) = (v_j,y_j)$, then $y_i \in [w(v_i)]$, $y_j \in [w(v_j)]$, and, since $(v_i,y_i)(v_j,y_j) \in E(G(w'))$ (as $(v_i,y_i), (v_j,y_j) \in X \in \mathcal{K}_k(G(w'))$), $\phi(v_i,y_i)\phi(v_j,y_j) \in E(G(w))$. Suppose $\phi(v_i,y_i) \neq (v_i,y_i)$ or $\phi(v_j,y_j) \neq (v_j,y_j)$. We may assume that $\phi(v_i,y_i) \neq (v_i,y_i)$. Thus, $v_i = b_p$ for some $p \in [r]$, $y_i \notin [w(v_i)]$, and $\phi(v_i,y_i) = (a_p, y)$ for some $y \in [w(a_p)]$. If $v_j = b_p$, then $\phi(v_j,y_j) \in (\{a_p\} \times [w(a_p)]) \cup (\{b_p\} \times [w(b_p)])$, and hence, since $a_pb_p \in E(G)$, $\phi(v_i,y_i)\phi(v_j,y_j) \in E(G(w))$. Suppose $v_j \neq b_p$. Since $(v_i,y_i)(v_j,y_j) \in E(G(w'))$, $v_iv_j \in E(G)$. Thus, $v_j \notin B$ as $v_i \in B$ and $B$ is an independent set of $G$.  Since $(v_j,y_j) \in V(G(w'))$, we have $w'(v_j) \neq 0$, so $v_j \notin A$. Since $v_j \notin A \cup B$, we have $\phi(v_j,y_j) = (v_j,y_j)$, $y_j \in [w(v_j)]$, and hence $w(v_j) \neq 0$. Given that $N_G(b_p) \setminus \pa{A \cup \{v \in V(G) \colon w(v) = 0\}} \subseteq N_G(a_p)$, $v_j \in N_G(a_p)$ (as $b_pv_j = v_iv_j \in E(G)$). Thus, since $\phi(v_i,y_i)\phi(v_j,y_j) = (a_p, y)(v_j,y_j)$, $\phi(v_i,y_i)\phi(v_j,y_j) \in E(G(w))$.

Let $\Phi : \mathcal{K}_k(G(w')) \rightarrow \mathcal{K}_k(G(w))$ such that $\Phi(X) = \phi(X)$ for each $X \in \mathcal{K}_k(G(w'))$. Suppose $\Phi(X_1) = \Phi(X_2)$ for some $X_1, X_2 \in \mathcal{K}_k(G(w'))$. Let $(v_1,y_1),\ldots,(v_k,y_k)$ be the members of $X_2$. For each $i \in [k]$, let $(v_i',y_i')$ be the member of $X_1$ such that $\phi(v_i',y_i') = \phi(v_i,y_i)$. Since $\phi$ is injective, $(v_i',y_i') = (v_i,y_i)$ for each $i \in [k]$. Thus, $X_1 = X_2$. Therefore, $\Phi$ is injective, and hence the size of its domain $\mathcal{K}_k(G(w'))$ is at most the size of its codomain $\mathcal{K}_k(G(w))$.
\end{proof}

We remark that the condition in Lemma~\ref{lem:main} that $B$ is an independent set can be generalized to condition 2 in the next lemma, using the same argument. 

\begin{lemma}
If $m \geq 1$, $k \geq 2$, $w$ is an $m$-weighting of a graph $G$, $A = \{a_1,\ldots,a_r\}$ and $B = \{b_1,\ldots,b_r\}$ are disjoint $r$-element subsets of $V(G)$ such that

	\begin{enumerate}
		\item $a_i b_i \in E(G)$ for each $i \in [r]$,
		\item $a_ia_j, a_ib_j, a_jb_i \in E(G)$ for every $i, j \in [r]$ with $b_ib_j \in E(G)$, and 
		\item $N_G(b_i) \setminus \pa{A \cup \{v \in V(G) \colon w(v) = 0\}} \subseteq N_G(a_i)$ for each $i \in [r]$,
	\end{enumerate}

\noindent and $w'$ is the $m$-weighting of $G$ given by

$$
w'(v)=\left\{
	\begin{array}{ll}
		0 & \textrm{if $v \in A$,}\\
		w(b_i) + w(a_i) \quad & \textrm{if $v = b_i$ for some $i \in
		[r]$,}\\
		w(v) & \textrm{otherwise}
	\end{array}\right.
$$

\noindent for each $v \in V(G)$, then $\pi_k(G(w')) \leq \pi_k(G(w))$.
\end{lemma}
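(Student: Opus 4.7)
The plan is to use exactly the same construction as in the proof of Lemma~\ref{lem:main}: define the bijection $\phi \colon V(G(w')) \rightarrow V(G(w))$ by
$$
\phi((v,i)) = \begin{cases} (a_j,\, i - w(b_j)) & \text{if } v = b_j \text{ and } i \in [w(b_j)+w(a_j)] \setminus [w(b_j)], \\ (v,i) & \text{otherwise}, \end{cases}
$$
and set $\Phi(X) = \phi(X)$ for each $X \in \mathcal{K}_k(G(w'))$. Since $\phi$ is bijective, $\Phi$ is automatically injective, so it suffices to show $\Phi(X) \in \mathcal{K}_k(G(w))$ for every $X$, which will yield $\pi_k(G(w')) \leq \pi_k(G(w))$.

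For this one must verify, for every pair of distinct vertices $u_1 = (v_1,y_1), u_2 = (v_2,y_2) \in X$, that $\phi(u_1)\phi(u_2) \in E(G(w))$. The cases already treated in the proof of Lemma~\ref{lem:main}---namely that $\phi$ fixes both $u_1,u_2$, that exactly one of them is $\phi$-moved while the other has base outside $A \cup B$, and that both are $\phi$-moved with a common base $b_p$---go through verbatim, since those arguments invoke only conditions 1 and 3 together with the fact that two vertices in the same blown-up clique are automatically adjacent.

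The genuinely new situation is when $v_1 = b_p$ and $v_2 = b_q$ with $p \neq q$ and at least one of $u_1,u_2$ is $\phi$-moved; the independence of $B$ in Lemma~\ref{lem:main} excluded precisely these configurations. Here $u_1u_2 \in E(G(w'))$ together with $v_1 \neq v_2$ forces $b_pb_q \in E(G)$, and condition 2 then supplies $a_pa_q,\, a_pb_q,\, a_qb_p \in E(G)$. Consequently each of the three image-pair shapes not already covered by the "fixes both" subcase, namely $(a_p,\cdot)(b_q,\cdot)$, $(b_p,\cdot)(a_q,\cdot)$, and $(a_p,\cdot)(a_q,\cdot)$, is an edge of $G(w)$ coming from the corresponding edge among $\{a_pb_q,\, a_qb_p,\, a_pa_q\}$.

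The main obstacle, such as it is, lies only in disciplined bookkeeping: tracking which regime of $[w(b_p)+w(a_p)]$ each index $y_i$ falls in in order to determine $\phi$'s action, and matching each of the resulting image-pairs to the precise edge of $G$ that condition 2 (or condition 3, or $a_pb_p \in E(G)$) supplies. Once this enumeration is carried out, the conclusion $\pi_k(G(w')) \leq \pi_k(G(w))$ follows exactly as in Lemma~\ref{lem:main}.
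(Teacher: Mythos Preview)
Your proposal is correct and matches the paper's intended approach exactly: the paper does not give a separate proof of this lemma but simply remarks that ``the same argument'' as for Lemma~\ref{lem:main} works, and you have carried this out by reusing the bijection $\phi$ and correctly isolating the one new configuration (bases $b_p$, $b_q$ with $p\neq q$) that condition~2 is designed to handle. One minor bookkeeping point: your list of ``cases already treated'' omits the subcase where one vertex is $\phi$-moved with base $b_p$ and the other is $\phi$-fixed with the \emph{same} base $b_p$, but this is covered verbatim by Lemma~\ref{lem:main}'s proof via condition~1, so there is no actual gap.
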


We now provide the proofs of Theorems~\ref{thm:cmg} and \ref{thm:chordal}, immediately demonstrating the applicability of Lemma~\ref{lem:main}. The lemma also has a crucial role in the proof of Theorem~\ref{thm:sperner_min}.

\begin{proof}[\textbf{Proof of Theorem~\ref{thm:cmg}}] Let $I_1, \dots, I_r$ be the distinct maximal partite sets of $G$ such that $|I_1| \geq \dots \geq |I_r|$. Note that $I_1, \dots, I_r$ are independent sets of $G$. For each $j \in [r]$, let $n_j = |I_j|$ and let $a_{j,1}, \dots, a_{j, n_j}$ be the distinct vertices in $I_j$. 

Let $W$ be the set of $m$-weightings of $G$. For each $w \in W$, let $h(w) = \max\{j \in [r] \colon w(v) \neq 0 \mbox{ for some } v \in I_j\}$. Let $W^* = \{w^* \in W \colon \pi_k(G(w^*)) \leq \pi_k(G(w)) \mbox{ for each } w \in W\}$. Let $w_0 \in W^*$ such that $h(w_0) \leq h(w)$ for each $w \in W^*$. 

Suppose $h(w_0) > 1$. Let $s = h(w_0)$. Let $J = \{a_{s-1,1}, \dots, a_{s-1,n_{s}}\}$. Since $J \subseteq I_{s-1}$, $J$ is an independent set of $G$. Since $G$ is a complete multipartite graph, $a_{s,i}a_{s-1,i} \in E(G)$ and $N_{G}(a_{s-1,i}) \setminus I_s \subseteq N_G(a_{s,i})$ for each $i \in [n_s]$. Let $w_1$ be the $m$-weighting of $G$ such that $w_1(a_{s,i})= 0$ for each $i \in [n_s]$, $w_1(a_{s-1,i}) = w_0(a_{s-1,i}) + w_0(a_{s,i})$ for each $i \in [n_s]$, and $w_1(v) = w_0(v)$ for each $v \in V(G) \setminus (I_s \cup J)$. By Lemma~\ref{lem:main}, $\pi_k(G(w_1)) \leq \pi_k(G(w_0))$. We have $h(w_1) = h(w_0) - 1$, contradicting the choice of $w_0$. Therefore, $s = 1$.

We have shown that $w_0(v) = 0$ for each $v \in V(G) \setminus I_1$. By Lemma~\ref{cor:cauchy}, $\pi_k(G(w')) \leq \pi_k(G(w_0))$. Consequently, $w' \in W^*$.
\end{proof}

\begin{proof}[\textbf{Proof of Theorem~\ref{thm:chordal}}] We use induction on $|V(G)| + |E(G)|$. Let $n = |V(G)|$. The result is trivial if $n = 1$ (the base case). Suppose $n \geq 2$. If $E(G) = \emptyset$, then the result is given by Lemma~\ref{lem:cauchy}. Suppose $E(G) \neq \emptyset$. Since $G$ is chordal, there exists a sequence $v_1, \dots, v_n$ such that $V(G) = \{v_1, \dots, v_n\}$ and $N_G[v_i] \setminus \{v_j \colon j \in [i-1]\} \in \mathcal{K}(G)$ for each $i \in [n]$. 

Suppose $N_G(v_1) = \emptyset$. Let $G_1 = G[V(G) \setminus \{v_1\}]$ and $m_1 = m - w(v_1)$. Let $w_1$ be the $m_1$-weighting of $G_1$ such that $w_1(v) = w(v)$ for each $v \in V(G_1)$. Let $w_1'$ be a uniform-$\alpha$ $m_1$-weighting of $G_1$. Then, for some largest independent set $I_1$ of $G_1$, $w_1'(v) = 0$ for each $v \in V(G_1) \setminus I_1$ and, for each $u \in I_1$, $w_1'(u) = \left\lfloor m_1/|I_1| \right\rfloor$ or $w_1'(u) = \left\lceil m_1/|I_1| \right\rceil$.  By the induction hypothesis, $\pi_k(G_1(w_1')) \leq \pi_k(G_1(w_1))$. Let $w''$ be the $m$-weighting of $G$ such that $w''(v_1) = w(v_1)$ and $w''(v) = w_1'(v)$ for each $v \in V(G_1)$. We have $\pi_k(G(w'')) = \binom{w(v_1)}{k} + \pi_k(G_1(w_1')) \leq \binom{w(v_1)}{k} + \pi_k(G_1(w_1)) = \pi_k(G(w))$. Since $N_G(v_1) = \emptyset$, $\{v_1\} \cup I_1$ is an independent set of $G$, so $\pi_k(G(w')) \leq \pi_k(G(w''))$ by Lemma~\ref{cor:cauchy}. Thus, $\pi_k(G(w')) \leq \pi_k(G(w))$. 


Now suppose $N_G(v_1) \neq \emptyset$. Let $h = \min\{i \in [n] \setminus \{1\} \colon v_i \in N_G(v_1)\}$. 
Let $w^*$ be the $m$-weighting of $G$ such that $w^*(v_1) = w(v_1) + w(v_h)$, $w^*(v_h) = 0$, and $w^*(v) = w(v)$ for each $v \in V(G) \setminus \{v_1, v_h\}$. By Lemma~\ref{lem:main}, $\pi_k(G(w^*)) \leq \pi_k(G(w))$. Let $G_2 = G[V(G) \setminus \{v_h\}]$. Let $w_2$ be the $m$-weighting of $G_2$ such that $w_2(v) = w^*(v)$ for each $v \in V(G_2)$. Let $w_2'$ be a uniform-$\alpha$ $m$-weighting of $G_2$. By the induction hypothesis, $\pi_k(G_2(w_2')) \leq \pi_k(G_2(w_2))$. Let $w''$ be the $m$-weighting of $G$ such that $w''(v_h) = 0$ and $w''(v) = w_2'(v)$ for each $v \in V(G_2)$. By Lemma~\ref{cor:cauchy}, $\pi_k(G(w')) \leq \pi_k(G(w''))$. Since $\pi_k(G(w'')) = \pi_k(G_2(w_2')) \leq \pi_k(G_2(w_2)) = \pi_k(G(w^*)) \leq \pi_k(G(w))$, it follows that $\pi_k(G(w')) \leq \pi_k(G(w))$.
\end{proof}


\section{Proofs of the main results}\label{sec:proofs}


In the proof of Theorem~\ref{thm:sperner_min}, we use the following well-known consequence of the K\H{o}nig--Hall Theorem \cite{Konig, Hall}; see, for example, \cite[page 7, Corollary 4]{Bollobas}.

\begin{lemma} \label{lemma:Hall} Let $r$ and $n$ be integers such that $0 \leq r \leq n$. If $r < n/2$, then there exists an injection $f : \binom{[n]}{r} \rightarrow \binom{[n]}{r+1}$ such that $A \subset f(A)$ for each $A \in \binom{[n]}{r}$. If $r > n/2$, then there exists an injection $f : \binom{[n]}{r} \rightarrow \binom{[n]}{r-1}$ such that $f(A) \subset A$ for each $A \in \binom{[n]}{r}$.
\end{lemma}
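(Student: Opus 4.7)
The plan is to reduce both cases to a single application of Hall's marriage theorem. First, observe that the two cases are dual: if $r > n/2$, then $n-r < n/2$, and taking complements $A \mapsto [n] \setminus A$ gives a bijection $\binom{[n]}{r} \to \binom{[n]}{n-r}$ that reverses the subset relation. Hence, once we produce an injection $g : \binom{[n]}{n-r} \to \binom{[n]}{n-r+1}$ with $B \subset g(B)$, the composition $A \mapsto [n] \setminus g([n] \setminus A)$ yields an injection $f : \binom{[n]}{r} \to \binom{[n]}{r-1}$ with $f(A) \subset A$. So it suffices to treat the case $r < n/2$.

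For that case, consider the bipartite graph $H$ with parts $X = \binom{[n]}{r}$ and $Y = \binom{[n]}{r+1}$, where $A \in X$ is adjacent to $B \in Y$ iff $A \subset B$. A desired injection $f$ corresponds exactly to a matching in $H$ saturating $X$, so by Hall's Theorem it suffices to verify that $|N_H(\mathcal{F})| \geq |\mathcal{F}|$ for every $\mathcal{F} \subseteq X$. The key observation is that $H$ is biregular: every $A \in X$ has degree exactly $n-r$ (one neighbour per element of $[n] \setminus A$), and every $B \in Y$ has degree exactly $r+1$.

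I would verify Hall's condition by double-counting the edges of $H$ with one endpoint in $\mathcal{F}$: this number equals $(n-r)|\mathcal{F}|$ from the $X$-side, while from the $Y$-side it is at most $(r+1)|N_H(\mathcal{F})|$. Therefore
\[
|N_H(\mathcal{F})| \geq \frac{n-r}{r+1}\,|\mathcal{F}|.
\]
Since $r < n/2$ and $r,n \in \mathbb{Z}$, we have $r \leq \lfloor (n-1)/2 \rfloor$, so $n-r \geq r+1$ and the ratio is at least $1$. Hall's condition thus holds, which completes the proof.

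There is no real obstacle here; the whole proof is a standard bipartite-matching argument. The only point that requires a moment of care is the symmetry step reducing $r > n/2$ to $r < n/2$ via complementation, since one has to check that the composition of the complementation bijection with the injection $g$ produced for $n-r$ indeed preserves strict containment in the correct direction.
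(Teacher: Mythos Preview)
Your argument is correct and is the standard proof of this fact. The paper itself does not give a proof: it simply states the lemma as a well-known consequence of the K\H{o}nig--Hall Theorem and refers the reader to Bollob\'as's textbook, so your write-up supplies exactly the details the paper omits (the biregular bipartite double-count to verify Hall's condition, plus the complementation symmetry for the case $r>n/2$).
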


\begin{proof}[\textbf{Proof of Theorem~\ref{thm:sperner_min}}] Let $W$ be the set of $m$-weightings of $B_n$. For any $w \in W$, let $g(w) = \min\{i \in \{0\} \cup [n] \colon w(v) \neq 0 \mbox{ for some } v \in \binom{[n]}{i}\}$ and $g'(w) = \max\{i \in \{0\} \cup [n] \colon w(v) \neq 0 \mbox{ for some } v \in \binom{[n]}{i}\}$, and let
\begin{equation} h(w) = \min\{\lceil n/2 \rceil, \, g(w)\} \quad \mbox{ and } \quad h'(w) = \max\{\lceil n/2 \rceil, \, g'(w)\}. \nonumber
\end{equation}
%
Let $W^* = \{w^* \in W \colon \pi_k(G(w^*)) \leq \pi_k(G(w)) \mbox{ for each } w \in W\}$. Let $w_0 \in W^*$ such that $h'(w_0) - h(w_0) \leq h'(w) - h(w)$ for each $w \in W^*$. It suffices to show that $\pi_k(B_n(w')) \leq \pi_k(B_n(w_0))$.

Suppose $h(w_0) < \lceil n/2 \rceil$. Let $a_1, \dots, a_r$ be the distinct members of $\binom{[n]}{h(w_0)}$. By Lemma~\ref{lemma:Hall}, there exist $r$ distinct members $b_1, \dots, b_r$ of $\binom{[n]}{h(w_0)+1}$ such that $a_1b_1, \dots, a_rb_r \in E(B_n)$. Let $I = \{a_1, \dots, a_r\}$ and $J = \{b_1, \dots, b_r\}$. For each $i \in \{0\} \cup [n]$, $\binom{[n]}{i}$ is an independent set of $B_n$. Thus, $I$ and $J$ are independent sets of $B_n$. For any $i \in [r]$ and $c \in V(B_n)$ with $b_i \subset c$, we have $a_i \subset c$ as $a_i \subset b_i$. Thus, $N_{B_n}(b_i) \setminus (I \cup \{v \in V(B_n) \colon w_0(v) = 0\}) \subseteq N_{B_n}(a_i)$ for each $i \in [r]$. Let $w_1$ be the $m$-weighting of $B_n$ such that $w_1(a_i)= 0$ for each $i \in [r]$, $w_1(b_i) = w_0(b_i) + w_0(a_i)$ for each $i \in [r]$, and $w_1(v) = w_0(v)$ for each $v \in V(B_n) \setminus (I \cup J)$. By Lemma~\ref{lem:main}, $\pi_k(B_n(w_1)) \leq \pi_k(B_n(w_0))$. We have $h'(w_1) - h(w_1) = h'(w_0) - h(w_0) - 1$, contradicting the choice of $w_0$. Therefore, $h(w_0) = \lceil n/2 \rceil$.

Suppose $h'(w_0) > \lceil n/2 \rceil$. Let $a_1, \dots, a_r$ be the distinct members of $\binom{[n]}{h'(w_0)}$. By Lemma~\ref{lemma:Hall}, there exist $r$ distinct members $b_1, \dots, b_r$ of $\binom{[n]}{h'(w_0)-1}$ such that $a_1b_1, \dots, a_rb_r \in E(B_n)$. Let $I = \{a_1, \dots, a_r\}$ and $J = \{b_1, \dots, b_r\}$. For any $i \in [r]$ and $c \in V(B_n)$ with $c \subset b_i$, we have $c \subset a_i$ as $b_i \subset a_i$. Thus, $N_{B_n}(b_i) \setminus (I \cup \{v \in V(B_n) \colon w_0(v) = 0\}) \subseteq N_{B_n}(a_i)$ for each $i \in [r]$. Let $w_1$ be the $m$-weighting of $B_n$ such that $w_1(a_i)= 0$ for each $i \in [r]$, $w_1(b_i) = w_0(b_i) + w_0(a_i)$ for each $i \in [r]$, and $w_1(v) = w_0(v)$ for each $v \in V(B_n) \setminus (I \cup J)$. By Lemma~\ref{lem:main}, $\pi_k(B_n(w_1)) \leq \pi_k(B_n(w_0))$. We have $h'(w_1) - h(w_1) = h'(w_0) - h(w_0) - 1$, contradicting the choice of $w_0$. Therefore, $h'(w_0) = \lceil n/2 \rceil$.

We have shown that $w_0(v) = 0$ for each $v \in V(B_n) \setminus \binom{[n]}{\lceil n/2 \rceil}$. By Lemma~\ref{cor:cauchy}, $\pi_k(B_n(w')) \leq \pi_k(B_n(w_0))$. Consequently, $w' \in W^*$.
\end{proof}

We now show that, as stated in Remark~\ref{rem:sperner_min}, the answer to Problem~\ref{problem_naif} for $G = B_n$ is negative if $n \geq 2$, $k \geq 3$, and $m \geq k \alpha_k(B_n)$.

\begin{theorem} \label{rem1result}
If $n \geq 2$, $k \geq 3$, $m \geq k \alpha_k(B_n)$, $w$ and $w'$ are $m$-weightings of $B_n$, $w$ is uniform-$\alpha$, and $w'$ is uniform on a largest $k$-clique independent set of $G$, then $\pi_k(G(w)) < \pi_k(G(w'))$.
\end{theorem}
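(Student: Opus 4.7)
The plan is to use Theorem~\ref{thm:sperner_min} for the weak inequality and sharpen it via one strict step of the shifting machinery of Lemma~\ref{lem:main}. Let $\alpha = \binom{n}{\lceil n/2 \rceil} = \alpha(B_n)$ and $\beta = \alpha_k(B_n)$. By Erd\H{o}s's theorem on families with no $k$-chain, $\beta$ equals the sum of the $k-1$ largest binomial coefficients $\binom{n}{i}$, and any largest $k$-clique independent set of $B_n$ has the form $I = \bigcup_{i \in L} \binom{[n]}{i}$ where $L$ is a set of $k-1$ consecutive indices around $\lceil n/2 \rceil$. In particular $\beta > \alpha$ using $n \geq 2$ and $k - 1 \geq 2$, and each weight of $w'$, lying in $\{\lfloor m/\beta \rfloor, \lceil m/\beta \rceil\}$, is at least $k$ by $m \geq k\beta$. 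Theorem~\ref{thm:sperner_min} gives $\pi_k(B_n(w)) \leq \pi_k(B_n(w'))$, so the task reduces to showing strict inequality.

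Since $|L| \geq 2$ and $\lceil n/2 \rceil \in L$, either $h := \min L < \lceil n/2 \rceil$ or $\max L > \lceil n/2 \rceil$; I treat the former case, the latter being symmetric. Apply one up-shift as in the proof of Theorem~\ref{thm:sperner_min} with $A = \binom{[n]}{h}$ and a Hall matching $B = \{b_1, \dots, b_{|A|}\} \subseteq \binom{[n]}{h+1}$ satisfying $a_i \subsetneq b_i$, obtaining an $m$-weighting $w_1$. Lemma~\ref{lem:main} provides an injection $\Phi : \mathcal{K}_k(B_n(w_1)) \to \mathcal{K}_k(B_n(w'))$ via the vertex bijection $\phi$; I exhibit a $k$-clique of $B_n(w')$ outside the image of $\Phi$. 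Pick any $a = a_i \in A$ with partner $b = b_i$. Because $|b| = h + 1 \leq \lceil n/2 \rceil \leq n - 1$ (using $n \geq 2$), some $x \in [n] \setminus b$ exists; set $v = a \cup \{x\}$. Then $v \in \binom{[n]}{h+1}$ satisfies $a \subsetneq v$, $v \neq b$, and $v$ is incomparable to $b$ in $B_n$ (equal size, distinct). Consecutiveness of $L$ and $k - 1 \geq 2$ give $h + 1 \in L$, so $v \in I$ and $w'(v) \geq k$. Therefore $C := \{(a, 1), (v, 1), (v, 2), \dots, (v, k-1)\}$ is a $k$-clique of $B_n(w')$. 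Its preimage $\phi^{-1}(C) = \{(b, 1 + w'(b)), (v, 1), \dots, (v, k-1)\}$ is not a clique in $B_n(w_1)$, because the pair $(b, 1 + w'(b))(v, 1)$ fails to be an edge since $b$ and $v$ are incomparable in $B_n$. So $C$ is not in the image of $\Phi$, and $\pi_k(B_n(w_1)) < \pi_k(B_n(w'))$.

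To conclude, continuing the shifting procedure from the proof of Theorem~\ref{thm:sperner_min} transforms $w_1$ into an $m$-weighting $w''$ supported on $\binom{[n]}{\lceil n/2 \rceil}$ with $\pi_k(B_n(w'')) \leq \pi_k(B_n(w_1))$, and Lemma~\ref{cor:cauchy} applied on the independent set $\binom{[n]}{\lceil n/2 \rceil}$ gives $\pi_k(B_n(w)) \leq \pi_k(B_n(w''))$. Chaining the inequalities yields $\pi_k(B_n(w)) < \pi_k(B_n(w'))$, as required. The main obstacle is producing a valid witness $v$ with $v \in I$ and $bv \notin E(B_n)$; the choice $v = a \cup \{x\}$ handles this using $n \geq 2$, the consecutiveness of $L$, and the lower bound $m \geq k\beta$ (which secures $w'(v) \geq k$ so that $C$ genuinely has $k$ vertices concentrated in $K^a \cup K^v$).
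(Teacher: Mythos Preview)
Your argument is correct and takes a noticeably more direct route than the paper's. The underlying mechanism is the same in both: a single application of Lemma~\ref{lem:main} is \emph{strictly} lossy whenever some $a_i$ has a positive-weight neighbour $v \neq b_i$ in the target level, because a $k$-clique using $K^{a_i}$ and $K^v$ has no $\Phi$-preimage (your clique $C$, the paper's set $S$). The difference is in how this strict step is located. You invoke the equality case of Erd\H{o}s's $k$-chain theorem to assert that every largest $k$-clique independent set $I$ is a union of $k-1$ consecutive middle levels; with that structure the very \emph{first} shift from level $h=\min L$ already supplies the witness $v=a\cup\{x\}$, since the full level $h+1$ lies in $I$ and $|b|\le n-1$ guarantees $x$ exists. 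The paper assumes no structure on $I$: it argues by contradiction that $w'$ is optimal, shifts all the way to the middle level, and then analyzes the \emph{last} shift via a double-count between adjacent levels and Sperner's theorem to force the extra neighbour (with one further back-step when $n$ is odd). Your version is cleaner and shorter, but it outsources the structural work to the characterization of extremal $k$-chain-free families, a result that goes beyond Erd\H{o}s's 1945 bound and is not established in the paper; you should cite it precisely (and note that when $k-1>n+1$ the set $L$ has $n+1<k-1$ indices, though your argument only uses $|L|\ge 2$ and consecutiveness, both of which still hold for $n\ge 2$).
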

\begin{proof} We build on the proof of Theorem~\ref{thm:sperner_min}. Let $w_0$ be an $m$-weighting of $B_n$ that is uniform on a largest $k$-clique independent set $I$ of $B_n$. For a contradiction, suppose $w_0 \in W^*$. Since $m \geq k \alpha_k(B_n) = k |I|$, $w_0(v) \geq k$ for each $v \in I$. Let $M = \binom{[n]}{\ceil{n/2}}$ and $M' = \binom{[n]}{\ceil{n/2} - 1}$. Since $k \geq 3$ and $M \cup M'$ is a $3$-clique independent set of $B_n$, $|I| \geq |M| + |M'|$. 
Thus, $I \nsubseteq M$, and hence $h(w_0) < h'(w_0)$. 


Suppose $h(w_0) < \ceil{n/2}$. Let $p = h'(w_0) - \ceil{n/2}$. Thus, $p \geq 0$. 
If $p > 0$, then, as shown in the proof of Theorem~\ref{thm:sperner_min}, we can define a sequence $w_1, \dots, w_{p}$ of $m$-weightings of $B_n$ such that, for each $i \in [p]$, $h'(w_i) = h'(w_{i-1}) - 1$, $\pi_k(B_n(w_i)) \leq \pi_k(B_n(w_{i-1}))$, and hence $w_i \in W^*$. Note that $h'(w_p) = \ceil{n/2}$ and $h(w_p) = h(w_0)$. Let $q = p + \ceil{n/2} - h(w_0)$. Since $h(w_0) \leq \ceil{n/2} - 1$, $q \geq p+1$. 
As shown in the proof of Theorem~\ref{thm:sperner_min}, we can define a sequence $w_{p+1}, \dots, w_{q}$ of $m$-weightings of $B_n$ such that, for each $i \in [q-p]$, $h(w_{p+i}) = h(w_{p+i-1}) + 1$, $h'(w_{p+i}) = h'(w_p) = \ceil{n/2}$, $\pi_k(B_n(w_{p+i})) \leq \pi_k(B_n(w_{p+i-1}))$, and hence $w_{p+i} \in W^*$. Since $h(w_q) = \ceil{n/2} = h'(w_q)$, 
\begin{equation} \mbox{$w_q(v) = 0$ for each $v \in V(B_n) \setminus M$.} \label{zeroelsewhere}
\end{equation} 
Suppose $w_q(u) = 0$ for some $u \in M$. Since $m \geq k|I| > k |M|$, $w_q(u') > k$ for some $u' \in M$. Let $w_q'$ be the $m$-weighting of $B_n$ such that $w_q'(u) = 1$, $w_q'(u') = w_q(u') - 1$, and $w_q'(v) = w_q(v)$ for each $v \in V(B_n) \setminus \{u, u'\}$. This gives us $\pi_k(B_n(w_q')) < \pi_k(B_n(w_q))$, contradicting $w_q \in W^*$. Thus, $w_q(v) > 0$ for each $v \in M$. Note that, since $w_0(v) \geq k$ for each $v \in I$, and $w_0(v) = 0$ for each $v \in V(B_n) \setminus I$, we have 
$$\mbox{$w_i(v) \geq k$ for each $i \in [q]$ and each $v \in V(B_n)$ such that $w_i(v) \neq 0$.}$$
Let $a_1, \dots, a_r$ be the distinct members of $\{v \in M' \colon w_{q-1}(v) \neq 0\}$. There exist $r$ distinct elements $b_1, \dots, b_r$ of $M$ such that $a_ib_i \in E(B_n)$ and $w_q(b_i) = w_{q-1}(b_i) + w_{q-1}(a_i) \geq 2k$ for each $i \in [r]$, and we have $w_q(a_i) = 0$  and $w_{q-1}(a_i) \geq k$ for each $i \in [r]$, $w_q(v) = w_{q-1}(v) \geq k$ for each $v \in M \setminus \{b_1, \dots, b_r\}$, and $w_q(v) = w_{q-1}(v) = 0$ for each $v \in V(B_n) \setminus (M \cup \{a_1, \dots, a_r\})$. Let $X = \{a_1, \dots, a_r\}$, $Y = \{b_1, \dots, b_r\}$, and $Y' = \bigcup_{i=1}^r (N_{B_n}(a_i) \cap M)$. Thus, $Y \subseteq Y'$. Each member of $X$ is a set of size $\ceil{n/2} - 1$ and has exactly $n - (\ceil{n/2} - 1)$ supersets in $M$, and each member of $M$ has exactly $\ceil{n/2}$ subsets in $M'$. We have 
\begin{align} (n - \ceil{n/2} + 1)r &= \sum_{x \in X} |N_{B_n}(x) \cap M| = \{xy \in E(B_n) \colon x \in X, \, y \in Y'\} \nonumber \\
&= \sum_{y \in Y'} |N_{B_n}(y) \cap X| \leq \ceil{n/2}|Y'|, \nonumber
\end{align} 
so $r \leq |Y'|$, and equality holds only if $n$ is odd. Suppose $r < |Y'|$. Then, $y \notin Y$ for some $y \in Y'$. Now $y \in N_{B_n}(a_j)$ for some $j \in [r]$, so $a_jy \in E(G)$ and $y \neq b_j$. Let $S = \{A \in \mathcal{K}_k(B_n(w_{q-1})) \colon A \cap \{(a_j,i) \colon i \in [w_{q-1}(a_j)]\} \neq \emptyset \neq A \cap \{(y,i) \colon i \in [w_{q-1}(y)]\}\}$. Then, $S \neq \emptyset$ as $k \geq 3$, $w_{q-1}(a_j) \geq k$, and $w_{q-1}(y) \geq k$. We have
\begin{equation} \pi_k(B_n(w_{q-1})) \geq \sum_{v \in M \setminus Y} \binom{w_{q-1}(v)}{k} + \sum_{i=1}^r \binom{w_{q-1}(a_i) + w_{q-1}(b_i)}{k} + |S| = \pi_k(B_n(w_{q})) + |S|, \label{extraedgecliques}
\end{equation}
contradicting $w_{q-1} \in W^*$. Therefore, $r = |Y'|$, and hence $Y' = Y$ and $n$ is odd. Thus, no member of $X$ is a subset of a member of $M \setminus Y$, and hence $X \cup (M \setminus Y)$ is an antichain of size $|M|$. By Theorem~\ref{thm:sperner}, $X \cup (M \setminus Y)$ is $M$ or $M'$. Since $\emptyset \neq X \subseteq M'$, we obtain $X = M'$ and $Y = M$. Suppose $w_{q-1}(b_i) > 0$ for some $i \in [r]$. Since $n \geq 2$, $n$ is odd, and $X = M'$, we have $a_jb_i \in E(B_n)$ for some $j \in [r] \setminus \{i\}$. Let $y = b_i$ and define $S$ as above. As in (\ref{extraedgecliques}), we obtain $\pi_k(B_n(w_{q-1})) \geq \pi_k(B_n(w_{q})) + |S| > \pi_k(B_n(w_{q}))$, contradicting $w_{q-1} \in W^*$. Thus, since $M = Y = \{b_1, \dots, b_r\}$, $w_{q-1}(v) = 0$ for each $v \in V(B_n) \setminus M'$. This implies that $\max\{i \in \{0\} \cup [n] \colon w_0(v) \neq 0 \mbox{ for some } v \in \binom{[n]}{i}\} \leq \ceil{n/2}-1$, so $p = 0$ and, since $|I| > |M'|$, we have $h(w_0) \leq \ceil{n/2}-2$, $q \geq 2$, and $h(w_{q-2}) = \ceil{n/2}-2$. Let $M'' = \binom{[n]}{\ceil{n/2}-2}$. We apply the argument starting after (\ref{zeroelsewhere}) for $M'$, $M''$, $w_{q-1}$, and $w_{q-2}$ instead of $M$, $M'$, $w_{q}$, and $w_{q-1}$, respectively, and the inequality corresponding to $r \leq |Y'|$ that we obtain is strict. Similarly to the above, this contradicts $w_{q-2} \in W^*$.

Therefore, we must have $h(w_0) \geq \ceil{n/2}$. For any $v \in V(B_n)$, let $v' = [n] \setminus v$. Let $I' = \{v' \colon v \in I\}$, and let $w_0'$ be the $m$-weighting of $B_n$ such that $w_0'(v) = w_0(v')$ for each $v \in V(B_n)$. For any $u, v \in V(B_n)$, $u' \subseteq v'$ if and only if $v \subseteq u$, so $u'v' \in E(B_n)$ if and only if $uv \in E(B_n)$. Thus, $\pi_k(B_n(w_0')) = \pi_k(B_n(w_0))$. Recall that $h(w_0) < h'(w_0)$. We have $h(w_0') = n - h'(w_0) < \ceil{n/2}$ as $h(w_0) \geq \ceil{n/2}$. By applying the argument for $I$ to $I'$, we obtain a contradiction.

Therefore, $w_0 \notin W^*$. By Theorem~\ref{thm:sperner_min}, the result follows.
\end{proof}

We next show that, as stated in Remark~\ref{rem:cmg}, the answer to Problem~\ref{problem_naif} is also negative if $G$, $k$, and $m$ are as in Remark~\ref{rem:cmg}.

\begin{theorem} If $I_1, I_2, \dots, I_r$ are the distinct maximal partite sets of a complete multipartite graph $G$, $|I_1| > |I_2| \geq \dots \geq |I_r|$, $k \geq 3$, $m \geq k \alpha_k(G)$, $w$ and $w'$ are $m$-weightings of $G$, $w$ is uniform-$\alpha$, and $w'$ is uniform on a largest $k$-clique independent set of $G$, then $\pi_k(G(w)) < \pi_k(G(w'))$.
\end{theorem}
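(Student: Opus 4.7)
The plan is to apply Lemma~\ref{lem:main} once to $w'$, show the resulting shift strictly decreases $\pi_k$, and then invoke Theorem~\ref{thm:cmg} to close the gap to $\pi_k(G(w))$.

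First I would analyze the structure of the largest $k$-clique independent set $I$ on which $w'$ is uniform. In a complete multipartite graph a subset is $k$-clique independent iff it meets at most $k-1$ of the parts, so a maximal such set is a union of whole parts, $I = \bigcup_{j \in J} I_j$ with $|J| = \min(k-1,r)$. A standard swap using $|I_1|>|I_2|$ (replacing any $I_{j_0} \subseteq I$ with $j_0 \neq 1$ by $I_1$) shows $1 \in J$, and since $k \geq 3$ and $r \geq 2$ (implicit in $|I_1|>|I_2|$) we get $|J|\geq 2$. Fix some $j^* \in J\setminus\{1\}$; note $|I_{j^*}|\leq |I_2| < |I_1|$.

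Next I would apply Lemma~\ref{lem:main} with $A := I_{j^*}$ and $B$ any subset of $I_1$ of size $|I_{j^*}|$; necessarily $B \subsetneq I_1$. The three hypotheses follow at once from the complete-multipartite structure: each $a_ib_i$ is an edge because $a_i$ and $b_i$ lie in different parts; $B \subseteq I_1$ is independent; and $N_G(b_i)\setminus A = V(G)\setminus(I_1 \cup I_{j^*}) \subseteq V(G)\setminus I_{j^*} = N_G(a_i)$. Writing $\omega$ for the resulting shifted $m$-weighting, the lemma yields $\pi_k(G(\omega)) \leq \pi_k(G(w'))$.

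The heart of the argument is to prove that this inequality is strict. Pick $u \in I_1\setminus B$ (nonempty since $|B|<|I_1|$) and $a \in A$ with partner $b \in B$. Since $m \geq k\alpha_k(G) = k|I|$ and $w'$ is uniform on $I$, every $v \in I$ satisfies $w'(v)\geq k$, so $Y := \{(a,1),(u,1),\dots,(u,k-1)\}$ is a valid $k$-clique of $G(w')$: the $k-1$ vertices in $K^u$ form a clique and $(a,1)$ is adjacent to each $(u,i)$ because $au \in E(G)$. However, the bijection $\phi$ from the proof of Lemma~\ref{lem:main} gives
\[
\phi^{-1}(Y) = \{(b,\, 1+w'(b)),(u,1),\dots,(u,k-1)\},
\]
and the distinct vertices $b,u$ both lie in the independent set $I_1$, so $(b,\cdot)$ and $(u,\cdot)$ are non-adjacent in $G(\omega)$. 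Hence $\phi^{-1}(Y) \notin \mathcal{K}_k(G(\omega))$, meaning $Y$ lies outside the image of the injection $\Phi$, which gives $\pi_k(G(\omega)) < \pi_k(G(w'))$. Chaining this with $\pi_k(G(w))\leq \pi_k(G(\omega))$ from Theorem~\ref{thm:cmg} (applicable because $w$ is uniform-$\alpha$) yields the desired strict inequality. The main obstacle is the explicit verification that the constructed $Y$ escapes the image of $\Phi$; this crucially uses the fact that $I_1$ is an independent set of $G$ containing both $b$ and $u$.
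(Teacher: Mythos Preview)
Your proof is correct and is cleaner than the paper's. Both arguments hinge on the same key fact: since $|I_1|>|I_{j^*}|$, the matching from $I_{j^*}$ into $I_1$ misses some vertex $u\in I_1$, and a $k$-clique in $G(w')$ using copies of both $a\in I_{j^*}$ and $u$ cannot survive the shift. The paper, however, runs a proof by contradiction modelled on its Sperner-graph argument (Theorem~\ref{rem1result}): it assumes $w'\in W^*$, performs a whole sequence of shifts $w_1,\dots,w_q$ (first pushing weight towards $I_1$ from below, then from above) until all weight sits on $I_1$, and then argues that the final shift from $I_2$ to $I_1$ was strict, contradicting $w_{q-1}\in W^*$. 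Your route is more direct: a single application of Lemma~\ref{lem:main}, the explicit exhibition of a clique $Y$ outside the image of the injection $\Phi$ (using that $\phi^{-1}(Y)$ places two vertices in the independent set $I_1$), and then an appeal to Theorem~\ref{thm:cmg}. The paper's approach has the advantage of reusing the machinery already built for $B_n$; yours has the advantage of isolating exactly where strictness enters and avoiding the contradiction scaffolding.
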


\begin{proof} We build on the proof of Theorem~\ref{thm:cmg}. We use the same idea in the proof of Theorem~\ref{rem1result}, which is to shift weights from one end to a largest independent set and shift weights from the other end to a second largest independent set. The proof for the current setting is similar but simpler. 

Let $M = I_1$ and $M' = I_2$. For each $w \in W$, let $h(w)$ be as in the proof of Theorem~\ref{thm:cmg}, and let $h'(w) = \min\{j \in [r] \colon w(v) \neq 0 \mbox{ for some } v \in I_j\}$. Let $w_0$ be an $m$-weighting of $G$ that is uniform on a largest $k$-clique independent set $I$ of $G$. Then, $|I| \geq |M| + |M'|$, so $h'(w_0) < h(w_0)$. Since $m \geq k \alpha_k(G) = k |I|$, $w_0(v) \geq k$ for each $v \in I$. For a contradiction, suppose $w_0 \in W^*$.

Let $p = h'(w_0) - 1$. If $p > 0$, then, as shown in the proof of Theorem~\ref{thm:cmg}, we can define a sequence $w_1, \dots, w_{p}$ of $m$-weightings of $G$ such that, for each $i \in [p]$, $h'(w_i) = h'(w_{i-1}) - 1$, $\pi_k(G(w_i)) \leq \pi_k(G(w_{i-1}))$, and hence $w_i \in W^*$. Note that $h'(w_p) = 1$ and $h(w_p) = h(w_0) > 1$. Let $q = p + h(w_0) - 1$. Since $h(w_0) \geq 2$, $q \geq p+1$. 
We can define a sequence $w_{p+1}, \dots, w_{q}$ of $m$-weightings of $G$ such that, for each $i \in [q-p]$, $h(w_{p+i}) = h(w_{p+i-1}) - 1$, $h'(w_{p+i}) = h'(w_p) = 1$, $\pi_k(G(w_{p+i})) \leq \pi_k(G(w_{p+i-1}))$, and hence $w_{p+i} \in W^*$. Since $h(w_q) = 1 = h'(w_q)$, 
\begin{equation} \mbox{$w_q(v) = 0$ for each $v \in V(G) \setminus M$.} \nonumber 
\end{equation} 
As in the proof of Theorem~\ref{rem1result}, $w_q(v) > 0$ for each $v \in M$, and
$$\mbox{$w_i(v) \geq k$ for each $i \in [q]$ and each $v \in V(G)$ such that $w_i(v) \neq 0$.}$$
Let $a_1, \dots, a_s$ be the distinct members of $\{v \in M' \colon w_{q-1}(v) \neq 0\}$. Let $a_{1,1}, \dots, a_{1,n_1}$, $\dots$, $a_{r,1}, \dots, a_{r,n_r}$ be as in the proof of Theorem~\ref{thm:cmg}. For each $i \in [s]$, $a_i = a_{2,j_i}$ for some $j_i \in [n_2]$. For each $i \in [s]$, let $b_i = a_{1,j_i}$. We have $w_q(b_i) = w_{q-1}(b_i) + w_{q-1}(a_i) \geq 2k$ for each $i \in [s]$. Let $Y = \{b_1, \dots, b_s\}$. Let $y = a_{1,n_1}$. Since $n_1 > n_2$, $y \notin Y$. As in the proof of Theorem~\ref{rem1result}, we obtain $\pi_k(G(w_{q-1})) > \pi_k(G(w_{q}))$ (as $y \in M \subseteq N_G(x)$ for each $x \in M'$), contradicting $w_{q-1} \in W^*$.

Therefore, $w_0 \notin W^*$. By Theorem~\ref{thm:cmg}, the result follows.
\end{proof}

\begin{proof}[\textbf{Proof of Theorem~\ref{thm:con_stable_false}}]
For any $n \in \mathbb{N}$, let $G_n$ be a triangle-free $n$-vertex graph and let $I_n$ be a largest independent set of $G_n$. Let $\alpha_n = |I_n|$, let $w_n$ be a $3n$-weighting of $G_n$ that is uniform on $V(G_n)$ (so $w(v) = 3$ for each $v \in V(G_n)$), and let $w_n'$ be a $3n$-weighting of $G_n$ that is uniform on $I_n$. It suffices to show that $\pi_3(G_n(w_n)) < \pi_3(G_n(w_n'))$ for some $n \in \mathbb{N}$. We have
$$\pi_3(G_n(w_n)) = \binom{3}{3}n + 2 \binom{3}{1} \binom{3}{2} \abs{E(G_n)} = n + 18 \abs{E(G_n)} $$
\noindent and
$$\pi_3(G_n(w_n')) \geq \binom{\partieentiere{\tf{3n}{\alpha_n}}}{3} \alpha_n. $$
Bohman \cite{Bohman} showed that we can choose the sequence $\{G_n\}_{n \in \mathbb{N}}$ so that $\alpha_n = {\rm O}(\sqrt{n \log n})$ and the maximum degree $\Delta(G_n)$ ($\max \{|N_{G_n}(v)| \colon v \in V(G_n)\}$) of $G_n$ satisfies $\Delta(G_n) = \Theta(\sqrt{n \log n})$ (see \cite[Theorem~2.5]{ABK}). Since $I_n$ is maximal, $V(G_n) = I_n \cup \bigcup_{v \in I_n} N_{G_n}(v)$, so 
$$n = |I_n \cup \bigcup_{v \in I_n} N_{G_n}(v)| \leq |I_n| + \sum_{v \in I_n} |N_{G_n}(v)| \leq (1 + \Delta(G_n)) \alpha_n,$$
and hence $\alpha_n \geq n / (1 + \Delta(G_n))$. Thus, for $n$ sufficiently large, we have 
$$\alpha_n \geq \frac{n}{a \sqrt{n \log n}}  \quad \mbox{ and } \quad \frac{1}{\alpha_n} \geq \frac{1}{b \sqrt{n \log n}}$$
\noindent for some positive real numbers $a$ and $b$, and hence
$$\pi_3(G_n(w_n')) \geq \binom{\partieentiere{\tf{3n}{b \sqrt{n \log n}}}}{3} \frac{n}{a \sqrt{n \log n}}.$$
Since $\binom{p}{3} \sim p^3/6$, we obtain
$$\binom{\partieentiere{\tf{3n}{b \sqrt{n \log n}}}}{3} = \Theta \paf{n^{\tf{3}{2}}}{\pa{\log n}^{\tf{3}{2}}}, $$
\noindent so $\pi_3(G_n(w_n')) = \Omega \pa{\tf{n^2}{(\log n)^2}}$. By the handshaking lemma, $2|E(G_n)| \leq n \Delta(G_n)$, so
$$\abs{E(G_n)} = \grando{}{\f{n}{2} \sqrt{n \log n}} = \grando{}{n^{3/2} \sqrt{\log n}}.$$
\noindent Thus, $\pi_3(G_n(w_n)) = \grando{}{n^{\tf{3}{2}} \sqrt{\log n}}$. Since $n^{\tf{3}{2}} \sqrt{\log n} = \petito{}{\tf{n^2}{(\log n)^2}}$, $\pi_3(G_n(w_n)) < \pi_3(G_n(w_n'))$ for $n$ sufficiently large.
\end{proof}


\section*{Acknowledgements}

The authors are grateful to the referees for checking the paper carefully and providing comments that led to an improvement in the presentation, and to St\'ephan Thomass\'e for a helpful discussion. Peter Borg was supported by grant MATRP14-22 of the University of Malta. Carl Feghali was supported by Agence Nationale de la Recherche (France) under research grant ANR DIGRAPHS ANR-19-CE48-0013-01.


\begin{thebibliography}{}

\bibitem{ABK} N. Alon, S. Ben--Shimon and M. Krivelevich,
A note on regular Ramsey graphs, J. Graph Theory 64 (2010), 244--249.

\bibitem{Bohman} T. Bohman, The triangle-free process, Adv. Math. 221 (2009), 1653--1677.

\bibitem{Bollobas} B. Bollob\'{a}s, Combinatorics: set systems, hypergraphs, families of vectors and combinatorial probability, Cambridge University Press, Cambridge, 1986.

\bibitem{DGS} S. Dass, W. Gan and B. Sudakov, Sperner's theorem and a problem of Erd\H{o}s, Katona and Kleitman, Discrete Appl. Math. 24 (2015), 585--608.

\bibitem{DGKS} A.P. Dove, J.R. Griggs, R.J. Kang and J.S. Sereni, Supersaturation in the Boolean lattice, Integers 14A (2014), 1--7.

\bibitem{Erdos} P. Erd\H{o}s, On a lemma of Littlewood and Offord, Bull. Amer. Math. Soc. 51 (1945), 898--902.

\bibitem{Hall} P. Hall, On representatives of subsets, J. London Math. Soc. 10 (1935), 26--30.

\bibitem{Katona} G.O.H. Katona, A generalization of the independence number, Discrete Appl. Math. 321 (2022), 1--3.

\bibitem{Kleitman} D. Kleitman, A conjecture of Erd\H{o}s--Katona on commensurable pairs among subsets of an $n$-set, in: Theory of Graphs: Proc. Colloq. Tihany, 1966, pp. 215--218.

\bibitem{Konig} D. K\H onig, Gr\'{a}fok \'es m\'atrixok, Mat. Fiz. Lapok 38 (1931), 116--119.

\bibitem{QEX} J. Qian, K. Engel and W. Xu, A generalization of Sperner's theorem and an application to graph orientations, Discrete Appl. Math. 157 (2009), 2170--2176.

\bibitem{Sperner} E. Sperner, Ein Satz \"{u}ber Untermengen einer endlichen Menge, Math. Z. 27 (1928), 544--548.

\end{thebibliography}



\end{document}